\documentclass[a4paper,10pt,reqno]{amsart}

\usepackage[utf8]{inputenc}
\usepackage[T1]{fontenc}
\usepackage{amsthm}
\usepackage{amsmath}
\usepackage{amssymb}
\usepackage[inline]{enumitem}
\usepackage{comment}
\PassOptionsToPackage{hyphens}{url}\usepackage{hyperref}
\usepackage{fancyhdr}
\usepackage{mathrsfs}
\usepackage{stmaryrd}
\usepackage[normalem]{ulem}
\usepackage{xcolor}
\usepackage{tikz-cd}
\usetikzlibrary{arrows}

\usepackage{cite}
\setlist[description]{%
  itemsep=0.05cm,               
  font={\normalfont\textsc}, 
 leftmargin=\parindent,
 labelindent=\parindent
}

\theoremstyle{definition}

\newtheorem{theorem}{Theorem}[section]
\newtheorem{lemma}[theorem]{Lemma}
\newtheorem{proposition}[theorem]{Proposition}
\newtheorem{corollary}[theorem]{Corollary}

\theoremstyle{definition}
\newtheorem{definition}[theorem]{Definition}

\newtheorem{remark}[theorem]{Remark}

\newtheorem*{conjecture*}{Conjecture}
\newtheorem*{problem*}{Problem}
\newtheorem*{question*}{Question}

\definecolor{blue-url}{RGB}{0,0,100}
\definecolor{red-url}{RGB}{100,0,0}
\definecolor{green-url}{RGB}{0,100,0}
\definecolor{light-yellow}{RGB}{255,255,128}
\definecolor{light-blue}{RGB}{193,255,255}
\definecolor{light-red}{RGB}{239,83,80}

\hypersetup{
	pdftitle={On the automorphisms of numerical power monoids},
	pdfauthor={Bhowmik and Tringali},
	pdfmenubar=false,
	pdffitwindow=true,
	pdfstartview=FitH,
	colorlinks=true,
	linkcolor=blue-url,
	citecolor=green-url,
	urlcolor=red-url
}

\renewcommand{\emptyset}{\varnothing}
\renewcommand{\setminus}{\smallsetminus}
\renewcommand{\,}{\kern 0.1em}

\DeclareMathOperator{\rev}{rev}

\providecommand\llb{\llbracket}
\providecommand\rrb{\rrbracket}

\newcommand{\evid}[1]{\textsf{#1}}
\newcommand{\fin}{\mathrm{fin}}
\newcommand{\aut}{\mathrm{Aut}}

{\newline\vspace{\abovedisplayskip}\hbox to \textwidth\bgroup\hss$\displaystyle}
{$\hss\egroup\vspace{\belowdisplayskip}}

\makeatletter
\DeclareFontFamily{OMX}{MnSymbolE}{}
\DeclareSymbolFont{MnLargeSymbols}{OMX}{MnSymbolE}{m}{n}
\SetSymbolFont{MnLargeSymbols}{bold}{OMX}{MnSymbolE}{b}{n}
\DeclareFontShape{OMX}{MnSymbolE}{m}{n}{
	<-6>  MnSymbolE5
	<6-7>  MnSymbolE6
	<7-8>  MnSymbolE7
	<8-9>  MnSymbolE8
	<9-10> MnSymbolE9
	<10-12> MnSymbolE10
	<12->   MnSymbolE12
}{}
\DeclareFontShape{OMX}{MnSymbolE}{b}{n}{
	<-6>  MnSymbolE-Bold5
	<6-7>  MnSymbolE-Bold6
	<7-8>  MnSymbolE-Bold7
	<8-9>  MnSymbolE-Bold8
	<9-10> MnSymbolE-Bold9
	<10-12> MnSymbolE-Bold10
	<12->   MnSymbolE-Bold12
}{}

\let\llangle\@undefined
\let\rrangle\@undefined
\DeclareMathDelimiter{\llangle}{\mathopen}%
{MnLargeSymbols}{'164}{MnLargeSymbols}{'164}
\DeclareMathDelimiter{\rrangle}{\mathclose}%
{MnLargeSymbols}{'171}{MnLargeSymbols}{'171}
\makeatother

\begin{document}
\title{On the automorphisms of numerical power monoids}

\author{Anwita Bhowmik}
\address{(A.~Bhowmik) School of Mathematical Sciences, Hebei Normal University | Shijiazhuang, Hebei province, 050024 China}
\email{bhowmikanwita@gmail.com}
\urladdr{https://anwitab.github.io/anwita\_website/}

\author{Salvatore Tringali}
\address{(S.~Tringali) School of Mathematical Sciences, Hebei Normal University | Shijiazhuang, Hebei province, 050024 China}
\email{salvo.tringali@gmail.com}
\urladdr{https://salvo-tringali.github.io/home/}

\subjclass[2020]{Primary 11B30, 11P99, 20E34, 20M10. Secondary 20M13.}
%

\keywords{Additive combinatorics, automorphism group, isomorphism problem, numerical monoid, power semigroup.}

\begin{abstract}
Let $H$ be a numerical monoid, that is, a cofinite submonoid of $\mathbb N$ (the non-negative integers under addition). Denote by $\mathcal P_{\text{fin},0}(H)$ the monoid obtained by endowing the family of all finite subsets of $H$ containing $0$ with the operation of setwise addition induced by $H$ on its power set.

Tringali and Yan [JCTA, 2025] have recently established that $\mathcal P_{\text{fin},0}(\mathbb N)$ has a unique non-trivial automorphism, and conjectured that the automorphism group of $\mathcal P_{\fin,0}(H)$ is trivial whenever $H \ne \mathbb N$. We prove this conjecture and, as a byproduct, give a new proof of the Tringali--Yan theorem.
\end{abstract}

\maketitle

\thispagestyle{empty}

\section{Introduction}
\label{sec:intro}

Let $H$ be a semigroup, written multiplicatively (see Section \ref{subsec:notation-terminology} for notation and terminology). The family of all non-empty
subsets of $H$ is then itself a semigroup under the binary operation 
\[
(X, Y) \mapsto XY:=\{xy:x\in X,\, y\in Y\}.
\]
This semigroup is denoted by $\mathcal P(H)$ and called the
\evid{large power semigroup} of $H$; other names used in the earlier
literature include \evid{global} and
\evid{semigroup of complexes}. The subsemigroup
$\mathcal P_\fin(H)$ of $\mathcal P(H)$ consisting of the non-empty finite subsets of $H$ is known as the \evid{finitary power semigroup} of $H$.

If, in particular, $H$ is a monoid with identity $1_H$, then both
$\mathcal P(H)$ and $\mathcal P_\fin(H)$ are monoids as well, their identity being the singleton $\{1_H\}$. Moreover, the families
\[
\mathcal P_1(H):=\{X\in\mathcal P(H):1_H\in X\}
\quad\text{and}\quad
\mathcal P_{\fin,1}(H):=\mathcal P_1(H)\cap\mathcal P_\fin(H)
\]
are submonoids of $\mathcal P(H)$ and $\mathcal P_\fin(H)$,
respectively. The former is dubbed the \evid{reduced large power
monoid} of $H$, and the latter the \evid{reduced finitary power monoid}
of $H$.

\begin{remark}
Depending on the context, we refer generically to any of the above structures
as a \evid{power semigroup} or a \evid{power monoid}. Note also that,
when $H$ is written additively, we adopt the same notational convention for every
subsemigroup of $\mathcal P(H)$. In practice, this translates into the fact that the
operation on $\mathcal P(H)$ maps a pair $(X,Y)$ of non-empty subsets
of $H$ to their \evid{sumset}
\[
X+Y:=\{x+y:x\in X,\ y\in Y\},
\]
as opposed to the \evid{setwise product} of $X$ and $Y$ in the multiplicative setting. By the same token, we use $\mathcal P_0(H)$ and $\mathcal P_{\fin,0}(H)$ instead of $\mathcal P_1(H)$ and $\mathcal P_{\fin,1}(H)$, respectively.
\end{remark}

Power semigroups were first systematically studied by Tamura and Shafer \cite{Tam-Sha1967}
in the late 1960s, though their explicit appearance in the literature can be traced back at least to Ballieu's 1950 paper \cite{Bal1950}. Among other things, power semigroups serve as an intrinsic algebraic framework for many
questions lying at the intersection of arithmetic
combinatorics and (semi)group theory  \cite{Bie-Ger-22, Tri-Yan-23(a), Tri-Yan2025(b), GarSan-Tri-24(a), Tri-Yan2026(a), Tri-Wen-2026(b), Tri-Wen-2026(a)}, while providing a flexible
benchmark for the study of (non-unique) factorizations beyond the
classical commutative and cancellative setting \cite{Fa-Tr18, An-Tr18, Tr20(c), Co-Tr-25(a), Rei-2026}.
For further details on these connections, see the survey \cite{Tri-Survey}.

A basic question dating back to Tamura and Shafer's work asks to what extent the structure of a semigroup can
be recovered from its large power semigroup, and a number of variants involving other power constructions have been considered over the years \cite[Section 1]{GarSan-Tri-24(a)}. The starting observation is that any semigroup isomorphism $f \colon H \to K$ induces an isomorphism $f^\ast \colon \mathcal P(H) \to \mathcal P(K)$ via $X \mapsto \allowbreak f[X]$ (the direct image of $X$ under $f$).
Since $f[X]$ is finite for every finite $X \subseteq H$, the map $f^\ast$ restricts to an isomorphism $\mathcal P_\fin(H) \to \mathcal P_\fin(K)$. Moreover, if $H$ is a monoid, then so is $K$, and $f$ sends $1_H$ to $1_K$ (see, e.g., \cite[Lemma 1.1]{Gou-Isk-Tsi-1984} and the last lines of \cite[Section 2]{Tri-2024(a)}); accordingly, $f^\ast$ also restricts to iso\-mor\-phisms $
\mathcal P_1(H) \to \mathcal P_1(K)$ and $
\mathcal P_{\fin,1}(H) \to \allowbreak \mathcal P_{\fin,1}(K)$.
An isomorphism between power semigroups or power monoids is said to be \evid{inner} if it arises in this way, and it is called \evid{outer} otherwise.

We are thus led to the problem of determining the outer automorphisms, if any, of a power semigroup. The reduced finitary power monoid $\mathcal P_{\fin,0}(\mathbb N)$ of the additive monoid $\mathbb N$ of non-negative integers provides a particularly instructive example. Indeed, it is easy to verify that the identity is the only automorphism of $\mathbb N$. Thus, every non-trivial automorphism of $\mathcal P_{\fin,0}(\mathbb N)$ is outer, and Bienvenu and Geroldinger observed in \cite[p.~13]{Bie-Ger-22} that at least one such automorphism exists, namely the \evid{reversion map}
\begin{equation}
\label{equ:reversion-map}
\rev \colon
X \mapsto \max(X)-X,
\qquad\text{where }
{-X} := \{-x: x \in X\} \subseteq \mathbb Z.
\end{equation} 
Tringali and Yan subsequently showed in \cite[Theorem~3.2]{Tri-Yan2025(b)} that there are no others, unaware that the same result had been obtained by Byrd et al.~in~\cite[Theorem~1]{Byr-Llo-Ped-Ste-1984}. They also conjectured in \cite[Section~4]{Tri-Yan2025(b)} that $\aut(\mathcal P_{\fin,0}(H))$ is trivial for every numerical monoid $H \ne \mathbb N$, where by a \evid{numerical monoid} we mean a cofinite (additive) submonoid of $\mathbb N$.
The principal goal of the present paper is to settle this conjecture in the affirmative. Along the way, we also give a new and more conceptual proof that $\aut(\mathcal P_{\fin,0}(\mathbb N))$ is cyclic of order two. That is, we obtain the following theorem, whose proof is deferred to Section \ref{sec:proof-of-main-thm}.

\begin{theorem}
\label{thm:main}
If $H$ is a numerical monoid, then $\aut(\mathcal P_{\fin,0}(H))$ is trivial unless $H = \mathbb N$, in which case the only non-trivial automorphism is the reversion map.
\end{theorem}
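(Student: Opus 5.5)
The plan is to show that any automorphism $\varphi$ of $\mathcal P_{\fin,0}(H)$ preserves enough intrinsic structure to be pinned down on a generating family, and then to use the extra rigidity coming from the gaps of $H$ when $H\ne\mathbb N$.

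\textbf{Invariants.} First I will record properties of $X$ that are defined purely in terms of the monoid operation, and hence are preserved by $\varphi$.
\begin{enumerate*}[label=(\roman*)]
\item The atoms (irreducibles) are preserved.
\item The set of factorizations is preserved.
\item The maximum $\max X$ is preserved. To see this, note that $\max(X+Y)=\max X+\max Y$. So $X\mapsto \max X$ is a monoid homomorphism onto a submonoid of $\mathbb N$. I would characterize $\max X$ intrinsically, for instance as the supremum of the lengths of factorizations of $X$ into non-units.
\end{enumerate*}
In $\mathbb N$ the sets $\{0,a\}$ with $\max=a$ have maximal factorization length $a$, namely $\{0,1\}^{a}$ only when $a\ne$ \dots. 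A cleaner route is to count. Among sets with given maximum $m$, the two-element sets $\{0,m\}$ are characterized as those $X$ such that $X+X$ has the fewest elements, or as the elements $X$ for which $X+Y=X+Z$ forces $Y=Z$ in some restricted form. Concretely, I plan to prove that $\varphi$ preserves $\max$ and the cardinality $|X|$. For cardinality, I would use that $\{0,m\}$ generates a free submonoid whose $k$-fold sums have $k+1$ elements, and that $|X+Y|$ is detected through divisibility chains.

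\textbf{Determining $\varphi$ on small sets.} Next I will show that $\varphi$ fixes every $\{0,m\}$ with $m\in H$. I will then analyse the three-element sets $\{0,a,m\}$. I expect $\varphi$ to send $\{0,a,m\}$ either to itself or to $\{0,m-a,m\}$. This should follow from equations such as $\{0,a\}+\{0,m-a\}\ni$ relations, e.g.
\[
\{0,a,m\}+\{0,m\}=\{0,a,m\}+\{0,a,m\}\setminus\dots,
\]
or more robustly from the identity $\{0,a\}+\{0,b\}=\{0,a,b,a+b\}$.

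\textbf{Extension and rigidity.} Then I would argue inductively on $\max X$ that $\varphi$ is either the identity or $\rev$ on all of $\mathcal P_{\fin,0}(H)$, since the choice made on one three-element set propagates to the others through sumset identities. The case $H=\mathbb N$ then gives exactly the two automorphisms. For $H\ne\mathbb N$, let $g$ be a gap, and choose $m\in H$ large with $m-a\notin H$ for some $a\in H$, $a<m$. Then $\{0,a,m\}\in\mathcal P_{\fin,0}(H)$ while $\rev(\{0,a,m\})\notin\mathcal P_{\fin,0}(H)$, so the reversion alternative is impossible and $\varphi$ is the identity.

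\textbf{Main obstacle.} I expect the hardest step to be the intrinsic characterization of $\max$ and of the sets $\{0,m\}$ when $H\ne\mathbb N$. There the element $\{0,1\}$ is absent, so the length-based arguments used for $\mathbb N$ break down. My first attempt would be to characterize $\{0,m\}$ as the elements $X$ for which the equation $X+Y=X+X$ has the unique solution $Y=X$, possibly combined with a unit-free cancellation property.
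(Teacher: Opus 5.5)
\textbf{There is a genuine gap: the step that determines $\varphi$ on atoms is missing.}

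Your outline has the right overall shape: preserve $\max$ and cardinality, fix the sets $\{0,m\}$, control three-element sets, propagate, and use a gap of $H$ to exclude the reversal. However, none of the load-bearing steps is supplied, and the concrete tools you suggest do not work.
\begin{itemize}
\item Characterizing $\max X$ as the supremum of factorization lengths fails, because every $\{0,a\}$ is an atom of $\mathcal P_{\fin,0}(H)$.
\item Your displayed relation for $\{0,a,m\}+\{0,m\}$ involves a set difference, which $\varphi$ does not respect.
\item Uniqueness of the solution of $X+Y=X+X$ does not single out two-element sets: in $\mathbb N$, $X=\{0,1,3\}$ has $Y=X$ as its only solution.
\item Preservation of cardinality (``divisibility chains'') and $\varphi(\{0,m\})=\{0,m\}$ are only asserted. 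In the paper they come, respectively, from Rago's counting argument and from the Tringali--Wen result that the pullback is trivial. Rago's argument says that for $t\in H$ with $t>\max A$, the equation $\{0,t\}+X=3\{0,t\}+A$ has exactly $2^{|A|}$ solutions $X$. Preservation of $\max$ is then obtained from $(n+1)A=nA+\{0,\max A\}$.
\end{itemize}

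The decisive gap is the passage from these invariants to the conclusion. Sets such as $\{0,a,m\}$ with $m\ne 2a$ are atoms, as are most elements of $\mathcal P_{\fin,0}(H)$. So the identity $\{0,a\}+\{0,b\}=\{0,a,b,a+b\}$ and ``propagation through sumset identities'' say nothing about $\varphi(A)$ for such $A$. Moreover, $\max$ and cardinality cannot tell apart, already in $\mathbb N$, the atoms $\{0,1,5\}$ and $\{0,2,5\}$, neither of which is the reversal of the other. Hence neither your three-element dichotomy nor the induction on $\max X$ is justified, and this is the heart of the theorem.

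The missing idea is a finer invariant plus a way to transport it. The paper uses the autocorrelation $\nu_h(A)=|A\cap(h+A)|$, which is preserved because $|\{0,h\}+A|=2|A|-\nu_h(A)$. With it:
\begin{itemize}
\item $\nu_\alpha$ forces $\varphi(\{0,\alpha,\alpha+1\})\in\{\{0,\alpha,\alpha+1\},\{0,1,\alpha+1\}\}$ for $\alpha\ge c$, and the second option requires $1\in H$.
\item Fixing these sets fixes $B=\{0\}\cup\llb a,n\rrb$, since $B$ is a sum of such sets.
\item Then $A+B=A\cup I$ with $A\cap I=\emptyset$, and $\nu_{a-x}(A\cup I)=|A\cap\llb x,m\rrb|+\text{const}$.
\item Hence $|A\cap\llb x,m\rrb|=|\varphi(A)\cap\llb x,m\rrb|$ for every $x\in\llb 0,m\rrb\cap H$, which forces $\varphi(A)=A$.
\end{itemize}
Your gap-based exclusion of the reversal is fine in spirit. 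But it only takes effect once such a ``fix one set, fix everything'' statement has been proved.
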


A couple of new ingredients used in the argument are the notion of ``autocorrelation'' introduced in Definition~\ref{def:h-adic-valuation} and the invariance property established in Proposition~\ref{prop:h-valuation-is-preserved}, both of which may be of independent interest. The latter relies, in turn, on Tringali and Yan's two-to-two lemma~\cite[Corollary~3.3]{Tri-Yan2026(a)} and Rago's cardinal lemma~\cite[Lemma~4]{Rago26(c)} (we slightly generalize this second result in Proposition~\ref{prop:rago-cardinal-lemma}).

Turning back to the literature, Wong et al.~proved in \cite[Theorem~1.3]{Wong2025} that the automorphism group of the finitary power semigroup $\mathcal P_{\fin}(S)$ of a numerical semigroup $S$ is trivial unless $S = \llbracket k, \infty \rrbracket$ for some integer $k \geq 0$, in which case $\aut(\mathcal P_{\fin}(S))$ is cyclic of order two. Here, a \evid{numerical semigroup} is a cofinite subsemigroup of $\mathbb N$, so a numerical monoid is precisely a numerical semigroup containing zero. 

As a follow-up, Tringali and Wen proved in \cite[Theorem~3.5]{Tri-Wen-2026(a)} that the automorphism group of the finitary power monoid $\mathcal P_{\fin}(\mathbb Z)$ of the additive group $\mathbb Z$ of integers is isomorphic to the direct product of a cyclic group of order two and the infinite dihedral group, in stark contrast to the fact that the only non-trivial automorphism of $\mathbb Z$ itself is the map $x \mapsto -x$. They also conjectured \cite[Section~4]{Tri-Wen-2026(a)} that $\aut(\mathcal P_{\fin,0}(\mathbb Z))$ is cyclic of order two, which was confirmed by Wong et al.~in \cite{Wong2026}.

In fact, Byrd et al.~had already established in~\cite[Theorem~2]{Byr-Llo-Ped-Ste-1984} that the automorphism group of $\mathcal P_\fin(\mathbb Z)$ is a splitting extension of $\mathbb Z$ by the Klein four-group $V_4$; that is, there exists an action $\phi$ of $V_4$ on $\mathbb Z$ such that
$\aut(\mathcal P_\fin(\mathbb Z))$ is isomorphic to the semidirect product $\mathbb Z \rtimes_\phi V_4$. However, since $V_4$ admits both trivial and non-trivial actions on $\mathbb Z$, this description does not by itself determine the isomorphism type of $\aut(\mathcal P_\fin(\mathbb Z))$. More generally, an analogous description of $\aut(\mathcal P_\fin(G))$ for every non-trivial additive subgroup $G$ of the rationals was obtained in \cite[Corollary~4.10]{Byr-Llo-Ste-1982}.

Rago \cite[Theorem~13]{Rago26(b)} then proved that, for a
finite abelian group $G$, the automorphism group of
$\mathcal P_{\fin,1}(G)$ is canonically isomorphic to $\aut(G)$,
except when $G \cong V_4$, in which case
$\aut(\mathcal P_{\fin,1}(G))$ is isomorphic to the direct product of
two copies of the symmetric group of degree three
\cite[Example~2]{Rago26(b)}. 
On a related note, results of Byrd
et al.~\cite[Theorems~2, 3, and~5]{Byr-Llo-Ped-Ste-1977} show that $\aut(\mathcal P_\fin(G))$ is canonically isomorphic to $\aut(G)$ whenever $G$ is either a finite cyclic group of order $n \notin \{3,4,5\}$ or a subgroup of the circle group of order greater than $5$. The same paper also describes $\aut(\mathcal P_\fin(G))$ when $G$ is cyclic of order $3$, $4$, or $5$ (ibid., Section~3, p.~31). Here, ``canonically'' means that augmentation induces a group isomorphism from $\aut(G)$ onto either $\aut(\mathcal P_{\fin,1}(G))$ or $\aut(\mathcal P(G))$, depending on the context.

As for future directions, it would be interesting to extend Theorem \ref{thm:main} from
numerical monoids to rational monoids (that is, submonoids of the rational numbers under addition) or to affine monoids
(that is, finitely generated submonoids of the direct sum of finitely
many copies of $\mathbb Z$).

\section{Preliminaries}
\label{sec:preliminaries}

Below, we first fix the notation and terminology used throughout the paper, and then collect some general preliminary results on power semigroups that will come in handy later.

\subsection{Generalities.}
\label{subsec:notation-terminology}
We use $\mathbb N$ for the additive monoid of non-negative integers, $\mathbb N^+$ for the set of positive integers, and $\mathbb Z$ for additive group of integers.
We write $|S|$ for the cardinality of a set $S$, and given a function $f \colon A \to B$, we let $f[X] := \allowbreak \{f(x): \allowbreak x \in X\}$ for every $X \subseteq A$.

Unless stated otherwise, we reserve the letters $i$ and $j$ (with or without subscripts) for non-negative integers, and the letters $m$ and $n$ for positive integers.
If $a, b \in \mathbb Z \cup \{\pm \infty\}$, then
\[
\llb a, b \rrb := \{k \in \allowbreak \mathbb Z \colon \allowbreak a \le k \le b\}
\]
is the (\evid{discrete}) \evid{interval} from $a$ to $b$.
If $k \in \mathbb N$, then $kX$ is the $k$-fold sum of a subset $X$ of $\mathbb Z$. Note that $\llb a, b \rrb = \emptyset$ when $b < a$; moreover, $0X = \{0\}$. 

We refer to Howie's monograph \cite{Ho95} for the basics of semigroup theory.
In particular, all morphisms are understood to be semigroup homomorphisms and, unless otherwise specified, all semigroups are written multiplicatively.
Further notation and terminology, if not explained when first used, are standard or should be clear from the context.

\subsection{Preparations}
We start with a result of Tringali and Yan~\cite[Corollary~3.3]{Tri-Yan2026(a)}, which plays a fundamental role in the study of isomorphism problems for reduced finitary power monoids:

\begin{proposition}
\label{prop:two-to-two-lemma}
Let $H$ and $K$ be arbitrary monoids, and let $f$ be an isomorphism from $\mathcal P_{\fin,1}(H)$ to $\mathcal P_{\fin,1}(K)$. There exists a bijection $g \colon H \to K$ such that $f(\{1_H, x\}) = \{1_K, g(x)\}$ for every $x \in H$.
\end{proposition}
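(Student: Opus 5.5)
The plan is to find a purely algebraic property that singles out the two-element sets $\{1_H, x\}$ with $x \ne 1_H$ inside $\mathcal P_{\fin,1}(H)$. Once we have such a property, any isomorphism $f$ must carry these sets onto the corresponding sets of $\mathcal P_{\fin,1}(K)$, and $g$ can be read off directly.

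\textbf{Basic facts.} Everything rests on the observation that $1_H \in X \cap Y$ implies $X \cup Y \subseteq XY$. Two consequences follow.
\begin{enumerate}[label=(\roman*)]
\item The identity $\{1_H\}$ is the only unit of $\mathcal P_{\fin,1}(H)$, so $f(\{1_H\}) = \{1_K\}$. We therefore set $g(1_H) := 1_K$.
\item If $Y$ divides $X$, in the sense that $X = YZ$ for some $Z$, then $Y \subseteq X$. Hence the divisibility preorder is contained in set inclusion.
\end{enumerate}

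\textbf{Characterizing two-element sets.} Call a non-identity element \emph{minimal} if its only divisors are $\{1_H\}$ and itself. By (ii), every $X = \{1_H, x\}$ with $x \ne 1_H$ is minimal. However, minimality alone is not enough, because larger sets can also be atoms. I would therefore look for a characterization of cardinality through a relation that can be expressed algebraically and that recovers inclusion on the relevant sets. My first attempt is to use the relation $Y \preceq X \iff XY = X$, or its variant $XY \subseteq X^n$ for some $n$, as a substitute for $Y \subseteq X$. I would then characterize the sets with $|X| \le 2$ as the non-identity elements $X$ admitting no $Y \notin \{\{1_H\}, X\}$ with $Y \preceq X$. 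This is a minimality condition that $f$ visibly preserves. The idempotent and periodic cases ($x^2 \in \{1_H, x\}$) will need a separate check, since there $\{1_H, x\}^2 = \{1_H, x\}$ and the sets behave differently. I expect this step, namely finding an intrinsic criterion that correctly separates the two-element sets from larger atoms in full generality, to be the main obstacle. It is where the finiteness of the sets must be used, for instance by comparing the lengths of chains $\{1_H\} \prec Y_1 \prec \dots \prec X$ with $|X|$, since a finite set admits chains of length at most $|X| - 1$.

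\textbf{Constructing $g$.} Once the class $\mathcal A_H = \{\{1_H, x\} : x \in H \setminus \{1_H\}\}$ is shown to be defined by a property preserved by isomorphisms, we get $f[\mathcal A_H] \subseteq \mathcal A_K$. Applying the same argument to $f^{-1}$ gives $f^{-1}[\mathcal A_K] \subseteq \mathcal A_H$, so $f$ restricts to a bijection $\mathcal A_H \to \mathcal A_K$. We then define $g(x)$ for $x \ne 1_H$ as the unique $y \ne 1_K$ with $f(\{1_H, x\}) = \{1_K, y\}$, and keep $g(1_H) = 1_K$. Injectivity and surjectivity of $g$ follow from those of the restriction of $f$ together with (i).
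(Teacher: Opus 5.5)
\textbf{There is a genuine gap: the key step is missing.} Your facts (i) and (ii) and the final construction of $g$ are correct, but they are the routine part. The entire content of the proposition is an isomorphism-invariant property that singles out the sets $\{1_H,x\}$ with $x\ne 1_H$, and your proposal leaves exactly this step open.

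The candidates you suggest already fail for $H=(\mathbb N,+)$.
\begin{enumerate}[label=(\alph*)]
\item With $Y\preceq X$ iff $XY=X$: in $\mathcal P_{\fin,0}(\mathbb N)$ the equality $X+Y=X$ forces $\max Y=0$, i.e.\ $Y=\{0\}$. So \emph{every} non-identity $X$ passes your test, for example $\{0,1,3\}$.
\item With the variant $XY\subseteq X^n$: for $X=\{0,1\}$ one has $X+Y\subseteq\llb 0,\max Y+1\rrb=(\max Y+1)X$ for all $Y$. So the two-element set $\{0,1\}$ fails your test. This relation is also phrased through set inclusion, so it is not visibly preserved by $f$ in the first place.
\item The chain-length remark only gives the upper bound $|X|-1$. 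That says nothing about atoms, whose divisor chains are trivial whatever their size.
\end{enumerate}

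There are structural reasons these attempts fail. No criterion built only from the divisors of $X$ can work: in $\mathcal P_{\fin,0}(\mathbb N)$, both $\{0,1\}$ and $\{0,1,3\}$ have exactly $\{0\}$ and themselves as divisors. Nor can any invariant relation recover inclusion, since the reversion automorphism maps $\{0,1\}\subseteq\{0,1,3\}$ to $\{0,1\}\not\subseteq\{0,2,3\}$. So the missing invariant must involve multiples of $X$ or its products with other elements, and it must survive torsion and idempotents.

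For instance, consider the purely algebraic test ``$X^nY=X^{n+1}$ for some $n$ holds only for $Y=X$''. It does single out the non-identity elements of the form $\{0,x\}$ in $\mathcal P_{\fin,0}(\mathbb N)$: if $|X|\ge 3$, then $Y=\{0,\max X\}$ also works, by the lemma of Tringali and Yan quoted in the proof of Lemma~\ref{lem:max-is-preserved}. Yet the same test fails for $X=\{0,1\}$ in $\mathbb Z/3\mathbb Z$, where $nX+\{0,2\}=(n+1)X$ for $n\ge 2$.

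The paper does not prove this proposition itself; it imports it from Tringali and Yan (Corollary~3.3 there). An argument of that strength, valid for arbitrary monoids including the torsion and idempotent cases you flagged, is what your proposal still lacks.
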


The bijection $g$ in Proposition~\ref{prop:two-to-two-lemma} is called the \evid{pullback} of the isomorphism $f$. However, in general, $g$ need not be an isomorphism, even in the special case where $H$ is an additive submonoid of a
free abelian group of rank two and $K$ coincides with $H$~\cite[Corollary~3]{Rago26}. 

We continue with a remarkable result of
Rago~\cite[Lemma~4]{Rago26(c)} concerning an invariance property of isomorphisms between the reduced finitary power monoids of
cancellative commutative monoids $H$ and $K$, both containing an element of infinite order. This is in fact the only version we shall use later. Here, however, we extend the result to arbitrary cancellative monoids and only assume that $H$ contains an element of
infinite order (Proposition~\ref{prop:rago-cardinal-lemma}). The proof closely follows Rago's original argument, although we present it in a more modular form and start with a pair of lemmas.

Given a semigroup $H$, we recall that an element $a \in H$ is \evid{left-cancellative} if left multiplication by $a$ is injective on $H$. Right cancellativity is defined in a symmetric way. An element is \evid{cancellative} if it is both left- and right-cancellative, and the semigroup itself is cancellative if each of its elements is.

\begin{lemma}
\label{lem:aperiodicity-implies-disjointness}
Let $H$ be a cancellative monoid, and assume that $a \in H$ is an element of infinite order. Given a set $X \in \mathcal P_{\fin,1}(H)$, there exists an integer $N \ge 1$ (depending on $X$) such that
\begin{equation}
\label{lem:aperiodicity-implies-disjointness:eq(1)}
a^{in} X \cap a^{jn} X = \emptyset,
\qquad \text{for all } n \ge N \text{ and } i, j \in \mathbb N \text{ with } i \ne j.
\end{equation}
\end{lemma}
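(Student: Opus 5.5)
The plan is to reduce disjointness to an equation of the form $x' = a^{m} x$ with $x, x' \in X$, and then use the fact that $X$ is finite while $a$ has infinite order. Note that $H$ need not be commutative, so $a^{in}X$ means left multiplication of the set $X$ by $a^{in}$, and we cannot move $a$ past elements of $X$.

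First, suppose $a^{in}X \cap a^{jn}X \ne \emptyset$ with $i \ne j$; by symmetry take $i < j$. Then there are $x, x' \in X$ with $a^{in}x' = a^{jn}x = a^{in}a^{(j-i)n}x$. Left cancellativity of $a^{in}$ (a product of cancellative elements is cancellative) gives $x' = a^{(j-i)n}x$. So it suffices to find $N$ such that $x' \ne a^{m}x$ for all $x, x' \in X$ and all $m \ge N$. Such an $N$ works for every $n \ge N$, because then $(j-i)n \ge n \ge N$.

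Second, fix an ordered pair $(x, x') \in X \times X$ and consider $E_{x,x'} := \{m \in \mathbb N^+ : a^m x = x'\}$. This set has at most one element. Indeed, if $a^m x = a^{m'}x$ with $m < m'$, then right cancellativity of $x$ gives $a^m = a^{m'}$. Left cancelling $a^m$ then gives $1_H = a^{m'-m}$, contradicting that $a$ has infinite order. Hence, since $X$ is finite, the union of all the sets $E_{x,x'}$ is a finite subset of $\mathbb N^+$. We take $N$ to be one more than its maximum, or $N = 1$ if the union is empty.

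Combining the two steps proves \eqref{lem:aperiodicity-implies-disjointness:eq(1)}. The only delicate point is the non-commutativity: we must cancel on the correct sides, namely $a^{in}$ on the left in the first step and $x$ on the right in the second. Once the sides are checked, the argument is routine. The hypothesis $1_H \in X$ is not needed.
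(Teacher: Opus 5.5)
Your proof is correct and follows essentially the same route as the paper: left cancellation of $a^{in}$ reduces the claim to $X \cap a^m X = \emptyset$ for all large $m$, and this in turn follows from right cancellation of $x$, the finiteness of $X$, and the infinite order of $a$. The only cosmetic difference is that you show directly that each pair $(x,x')$ admits at most one exponent $m$, whereas the paper argues by contradiction via the pigeonhole principle.
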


\begin{proof}
Suppose by way of contradiction that $X \cap a^n X$ is non-empty for infinitely many $n \in \mathbb N^+$. Since $X$ is finite, the pigeonhole principle yields distinct $r, s \in \mathbb N^+$ and $x, y \in X$ such that $a^r x = y = a^s x$.
By right cancellativity, it follows that $a^r = a^s$, contradicting the assumption that $a$ has infinite order in $H$. Consequently, there exists an integer $N_X \ge 1$ such that
\begin{equation}
\label{lem:aperiodicity-implies-disjointness:eq(2)}
X \cap a^n X = \emptyset,
\qquad\text{for every }n \ge N_X. 
\end{equation}
We claim that Eq.~\eqref{lem:aperiodicity-implies-disjointness:eq(1)} holds with $N = N_X$. Otherwise, $a^{in}u = a^{jn}v$ for some $n \ge N$, distinct $i, j \in \mathbb N$, and $u, v \in \allowbreak X$. Without loss of generality, we may assume $i < j$. Then, by left cancellativity, $u = a^{(j-i)n}v$,
and hence $X \cap a^{(j-i)n}X \ne \emptyset$.
However, since $(j-i)\,n \ge n \ge N_X$, this contradicts
Eq.~\eqref{lem:aperiodicity-implies-disjointness:eq(2)}.
\end{proof}

\begin{lemma}
\label{lem:order-is-preserved}
Let $H$ and $K$ be monoids, and let $g$ be the pullback of an isomorphism $f \colon \mathcal P_{\fin,1}(H) \to \mathcal P_{\fin,1}(K)$. If $a$ is an element of infinite order in $H$, then $g(a)$ has infinite order in $K$. Moreover, 
\begin{equation}
\label{lem:order-is-preserved:eq(1)}
g(a^m) = g(a)^m,
\qquad\text{for all }
m \in \mathbb N.
\end{equation}
\end{lemma}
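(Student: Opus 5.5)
The plan is to use only the multiplicativity of $f$, the defining property of the pullback $g$ in Proposition~\ref{prop:two-to-two-lemma}, and the fact that $f$ preserves the identity. Nothing about cancellativity is needed, which matches the statement. Put $b := g(a)$. The key observation is that powers of a two-element set are "intervals of powers":
\[
\{1_H,a\}^n = \{1_H, a, \dots, a^n\},
\qquad\text{hence}\qquad
f(\{1_H,a\}^n) = \{1_K,b\}^n = \{1_K, b, \dots, b^n\}.
\]
Note also that $f(\{1_H\}) = \{1_K\}$, because an isomorphism of monoids maps identity to identity. It follows that $g(1_H) = 1_K$. Since $g$ is a bijection, $g(x) \ne 1_K$ whenever $x \ne 1_H$.

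\emph{Step 1: $b$ has infinite order.} Suppose instead that $b$ has finite order. Then the cyclic submonoid generated by $b$ is finite. The sets $\{1_K,b\}^n$ form an increasing chain, since each contains $1_K$ and so $\{1_K,b\}^n \subseteq \{1_K,b\}^{n+1}$, and they all lie in this finite submonoid. So the chain stabilizes: $\{1_K,b\}^n = \{1_K,b\}^{n+1}$ for some $n$. Applying the inverse isomorphism $f^{-1}$ gives $\{1_H,a\}^n = \{1_H,a\}^{n+1}$. Hence $a^{n+1} \in \{1_H, a, \dots, a^n\}$, so $a$ has finite order, a contradiction.

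\emph{Step 2: $g(a^m) = b^m$.} The case $m = 0$ is just $g(1_H) = 1_K$. Fix $m \ge 1$ and set $c := g(a^m)$. In $H$ we have the identity
\[
\{1_H, a^m\}\,\{1_H, a, \dots, a^{m-1}\} = \{1_H, a, \dots, a^{2m-1}\}.
\]
Applying $f$ and using the display above yields
\[
\{1_K, c\}\,\{1_K, b, \dots, b^{m-1}\} = \{1_K, b, \dots, b^{2m-1}\}.
\]
In particular $c$ lies in the right-hand side, so $c = b^k$ for some $k \in \llb 0, 2m-1 \rrb$.

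Now $a^m \ne 1_H$, because $a$ has infinite order, so $c \ne 1_K$ and therefore $k \ge 1$. Since $b$ has infinite order by Step 1, the powers of $b$ are pairwise distinct. Comparing exponent sets, the equality above becomes
\[
\llb 0, m-1 \rrb \cup \llb k, k+m-1 \rrb = \llb 0, 2m-1 \rrb .
\]
The maximum of the right-hand side is $2m-1$, while that of the left-hand side is $\max(m-1, k+m-1) = k+m-1$, since $k \ge 1$. So $k+m-1 = 2m-1$, that is, $k = m$, and therefore $g(a^m) = b^m = g(a)^m$.

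The main obstacle is Step 1. Without knowing that $b$ has infinite order, the exponent comparison in Step 2 fails, because distinct exponents could name the same element. The stabilization argument through $f^{-1}$ is what resolves this.
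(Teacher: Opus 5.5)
Your proof is correct. Step~2 is essentially the paper's argument. The paper likewise applies $f$ to the identity $\{1_H,a^m\}\{1_H,a\}^{m-1}=\{1_H,a\}^{2m-1}$, reads off $g(a^m)=b^\ell$ with $\ell\in\llb 0,2m-1\rrb$, and compares exponents using that $b$ has infinite order. Your appeal to the bijectivity of $g$ to get $k\ge 1$ is harmless but not needed, since $\max\bigl(\llb 0,m-1\rrb\cup\llb k,k+m-1\rrb\bigr)=k+m-1$ for every $k\ge 0$.

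Where you genuinely differ is Step~1. The paper does not prove that $g(a)$ has infinite order; it cites \cite[Proposition~4.1]{Tri-Yan2026(a)} for this. You instead give a self-contained argument: $f$ carries the chain $\{1_H,a\}^n$ to the chain $\{1_K,b\}^n$ and $f^{-1}$ carries it back, so one chain stabilizes exactly when the other does. This makes the lemma independent of the external reference. It uses nothing beyond the facts that $f$ is a monoid isomorphism and $f(\{1_H,a\})=\{1_K,b\}$. By symmetry, it even shows that the first stabilization index, and hence the cardinality of the cyclic submonoid, is the same for $a$ and $g(a)$. The citation buys brevity; your route buys a fully elementary and self-contained proof.
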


\begin{proof}
Set $b := g(a)$. By
\cite[Proposition~4.1]{Tri-Yan2026(a)}, $b$ has infinite
order in $K$. So, we focus on proving that
\[
g(a^m) = b^m,
\qquad\text{for all }
m \in \mathbb N.
\]

The claim is trivial for $m = 0$, because $f(\{1_H\}) = \{1_K\}$ and hence $g(1_H) = 1_K$. If $m \geq 1$, then
\[
\{1_H, a^m\}\{1_H, a\}^{m-1} = \{1_H, a^m\} \{1_H, a, \ldots, a^{m-1}\} = \{1_H, a, \ldots, a^{2m-1}\} = \{1_H, a\}^{2m-1}.
\]
Applying $f$ and recalling the definition of the pullback, it follows that
\[
g(a^m) \in \{1_K, g(a^m)\}\{1_K, b\}^{m-1} = f(\{1_H, a^m\}) f(\{1_H, a\})^{m-1} = f(\{1_H, a\})^{2m-1} = \{1_K, b\}^{2m-1}.
\]
In particular, this yields $g(a^m) = b^\ell$ for some
$\ell\in\llbracket 0,2m-1\rrbracket$, and therefore
\begin{equation*}
\{1_K, b^\ell\} \{1_K,b\}^{m-1}
   =\{1_K,b\}^{2m-1}.
\end{equation*}
Since $b$ has infinite order, its non-negative powers are pairwise distinct. Thus, comparing the exponents on the two sides of the last display gives $\ell = m$, as desired.
\end{proof}

Incidentally, Lemma~\ref{lem:order-is-preserved} may be regarded as a complement to \cite[Proposition~4.3]{Tri-Yan2026(a)}, where Eq.~\eqref{lem:order-is-preserved:eq(1)} is established under the assumption that both $H$ and $K$ are cancellative monoids, irrespective of whether $a$ has infinite order in $H$.

\begin{proposition}
\label{prop:rago-cardinal-lemma}
Let $H$ and $K$ be cancellative monoids, and assume that $H$ contains at least one element of infinite order. Every isomorphism $f \colon \mathcal P_{\fin,1}(H) \to \mathcal P_{\fin,1}(K)$ is \evid{cardinality-preserving}, that is,
\[
|A| = |f(A)|, \qquad\text{for all } A \in \mathcal P_{\fin,1}(H).
\]
\end{proposition}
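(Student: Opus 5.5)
The plan is to find a quantity defined purely in terms of the monoid operation of $\mathcal P_{\fin,1}(H)$, and hence preserved by $f$, from which $|A|$ can be read off. Fix $a\in H$ of infinite order and set $b:=g(a)$, where $g$ is the pullback from Proposition~\ref{prop:two-to-two-lemma}. By Lemma~\ref{lem:order-is-preserved}, $b$ has infinite order and $g(a^n)=b^n$. Consequently
\[
f(\{1_H,a^n\})=\{1_K,b^n\}\quad\text{and}\quad f(\{1_H,a^n\}^k)=\{1_K,b^n\}^k,
\]
where $\{1_H,a^n\}^k=\{1_H,a^n,\dots,a^{kn}\}$ and similarly in $K$.

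Given $A\in\mathcal P_{\fin,1}(H)$, put $B:=f(A)$. Apply Lemma~\ref{lem:aperiodicity-implies-disjointness} to $A$ with $a$ in $H$, and to $B$ with $b$ in $K$ (this is where cancellativity of $K$ is needed), and take $n$ at least the larger of the two thresholds. Then for every $k$,
\[
X_k:=A\{1_H,a^n\}^k=\bigsqcup_{i=0}^{k}a^{in}A,
\qquad
f(X_k)=B\{1_K,b^n\}^k=\bigsqcup_{i=0}^{k}b^{in}B,
\]
so $|X_k|=(k+1)|A|$ and $|f(X_k)|=(k+1)|B|$. The rest of the argument has to show that some algebraic invariant of $X_k$, evaluated along this family, detects the factor $|A|$.

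The invariant I would try first is the number of divisors. Let $d(X)$ be the number of $Y\in\mathcal P_{\fin,1}$ with $YZ=X$ for some $Z$; this number is finite, since $1\in Z$ forces $Y\subseteq X$. Because $f$ is an isomorphism, $d(f(X))=d(X)$. The idea is that $X_k$, being a disjoint union of $k+1$ translates of $A$, should have a divisor count whose growth in $k$ (for instance its degree as a function of $k$, or its exponential rate) is governed by $|A|$. Divisors of the form $A'\{1_H,a^n\}^{j}$ with $A'\subseteq A$ suggest such a dependence. If the count turns out too messy, a fallback is the number of factorizations of $X_k$ into products of two-element sets $\{1,x\}$ together with $A$-type factors. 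Another option is to characterize the maximal number $m$ for which $X_k$ can be written as a product involving $m$ pairwise ``independent'' translates, using $\{1,c\}Y=Y\cup cY$ to encode unions by disjoint translates algebraically.

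I expect the main obstacle to be this combinatorial step: isolating a divisor-type invariant of $X_k$ that grows with $|A|$ in a way that is strictly monotone, so that equality of the invariants for $X_k$ and $f(X_k)$ for all large $k$ forces $|A|=|B|$. Once that invariant is in hand, the conclusion $|A|=|f(A)|$ follows immediately from comparing the two sides.
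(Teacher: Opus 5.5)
\textbf{There is a genuine gap.} Your setup matches the paper's: the pullback $g$, $b=g(a)$ of infinite order with $f(\{1_H,a^n\})=\{1_K,b^n\}$, and a single $n$ that makes the translates $a^{in}A$ and $b^{in}B$ pairwise disjoint. But the argument stops where the real work begins. You never produce an $f$-invariant quantity that is provably a function of $|A|$, as you acknowledge yourself.

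The candidate you propose, the total number $d(X_k)$ of divisors, is the wrong kind of quantity. It ranges over \emph{all} left factors $Y$ of $X_k$, not just sets of the form $A'\{1_H,a^n\}^j$. That includes factors coming from factorizations of $A$ itself, or involving elements unrelated to $a$. Disjointness of translates gives no control over these. So you have no route to showing that $d(X_k)$, or its growth in $k$, is determined by $|A|$ alone, uniformly over all cancellative monoids, which is what the comparison with $B\subseteq K$ requires. The fallbacks (``$A$-type factors'', ``independent translates'') are not defined through the monoid operation, so it is not even clear that they are invariant under $f$.

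The missing idea is to \emph{fix} the two-element factor and count cofactors. Count the $X\in\mathcal P_{\fin,1}(H)$ with $\{1_H,a^n\}X=\{1_H,a^n\}^3A$. Because $f(\{1_H,a^n\})=\{1_K,b^n\}$, $f$ maps these bijectively onto the solutions $Y\in\mathcal P_{\fin,1}(K)$ of $\{1_K,b^n\}Y=\{1_K,b^n\}^3B$.

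Disjointness now pins the solutions down:
\begin{itemize}
\item $X\subseteq A\cup a^nA\cup a^{2n}A\cup a^{3n}A$, and $X\cap a^{3n}A=\emptyset$, since otherwise $a^nX$ would meet $a^{4n}A$.
\item Hence $A$ can only be covered by $X$, and $a^{3n}A$ only by $a^nX$.
\item This gives $A\cup a^{2n}A\subseteq X\subseteq A\cup a^nA\cup a^{2n}A$, with the middle block free.
\end{itemize}
So there are exactly $2^{|A|}$ solutions, and likewise $2^{|B|}$ in $K$, which forces $|A|=|B|$.

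With exponent $2$ the solution would be unique, which is why $3$ is used. Your growth-in-$k$ intuition does hold for this cofactor count: with right-hand side $\{1_H,a^n\}^kA$ it equals $F_k^{|A|}$, where $F_1=F_2=1$ are Fibonacci numbers. But $k=3$ already suffices.

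A minor point: $A\{1_H,a^n\}^k$ is a union of \emph{right} translates $Aa^{in}$. Lemma~\ref{lem:aperiodicity-implies-disjointness} concerns left translates $a^{in}A$. Since $H$ need not be commutative, put the two-element factor on the left.
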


\begin{proof}
Let $g \colon H \to K$ be the pullback of $f$. By hypothesis, $H$
contains an element of infinite order, say $a$. By
Lemma~\ref{lem:order-is-preserved}, the image $b := g(a)$ has infinite order in $K$. In addition, we have
\begin{equation}
\label{prop:rago-cardinal-lemma:eq(0)}
f(\{1_H, a^m\}) = \{1_K, b^m\},
\qquad\text{for every }
m \in \mathbb N.
\end{equation}

Fix $A \in \mathcal P_{\fin,1}(H)$, and set $B := f(A)$. By Lemma \ref{lem:aperiodicity-implies-disjointness} (applied first to the pair $(a, A)$ and then to the pair $(b, B)$), there exists an integer $n \ge 1$ such that
\begin{equation}
\label{prop:rago-cardinal-lemma:eq(1)}
a^{in} A \cap a^{jn} A = b^{in} B \cap b^{jn} B = \emptyset,
\qquad \text{for all } i, j \in \mathbb N \text{ with } i \ne j.
\end{equation}
Accordingly, let $\phi$ be the mapping
\[
2^H \to 2^H \colon V \mapsto A \cup a^{2n} A \cup V,
\]
where $2^H$ is the power set of $H$.
We claim that the solutions $X \in \mathcal P_{\fin,1}(H)$ to the equation
\begin{equation}
\label{lem:rago-cardinal-lemma:eq(4)}
\{1_H, a^n\}X = \{1_H, a^n\}^3 A
\end{equation}
are precisely the sets $\phi(V)$ with $V \subseteq a^n A$, and hence
the equation has exactly $2^{|A|}$ solutions.

The second assertion is straightforward from the first. In fact, left
cancellativity gives $|a^n A| = |A|$, implying that $a^nA$ has $2^{|A|}$ subsets. Moreover, it is clear from
Eq.~\eqref{prop:rago-cardinal-lemma:eq(1)} that the restriction of
$\phi$ to the power set of $a^n A$ is injective. We therefore focus below on
the first part of the claim.

Note first that if $V \subseteq a^n A$, then $\phi(V) \in \mathcal P_{\fin,1}(H)$. Moreover, $\{1_H, a^n\}V \subseteq a^n A \cup a^{2n}A$, and hence
\[
\begin{split}
\{1_H, a^n\} (A \cup a^{2n} A) & \subseteq \{1_H, a^n\} \phi(V) = \{1_H, a^n\} (A \cup a^{2n} A) \cup \{1_H, a^n\} V \\
& \subseteq \{1_H, a^n\} (A \cup a^{2n} A) \cup (a^n A \cup a^{2n}A) = \{1_H, a^n\} (A \cup a^{2n} A).
\end{split}
\]
So, $\phi(V)$ is indeed a solution of Eq.~\eqref{lem:rago-cardinal-lemma:eq(4)}, upon observing that 
\[
\{1_H, a^n\} (A \cup a^{2n} A) = \{1_H, a^n\}^3 A.
\]

Conversely, suppose that
$X \in\mathcal P_{\fin,1}(H)$ is a solution to Eq.~\eqref{lem:rago-cardinal-lemma:eq(4)}, so that
\begin{equation}
\label{prop:rago-cardinal-lemma:inclusion}
X \cup a^n X = A \cup a^n A \cup a^{2n} A \cup a^{3n} A.
\end{equation}
It is readily checked that $X \cap a^{3n}A = \emptyset$. Otherwise, Eq.~\eqref{prop:rago-cardinal-lemma:inclusion} would imply
\[
\emptyset \ne a^n X \cap a^{4n} A \subseteq (A \cup a^n A \cup a^{2n} A \cup a^{3n} A) \cap a^{4n} A = \bigcup_{i = 0}^3 (a^{in} A \cap a^{4n}A),
\]
contrary to Eq.~\eqref{prop:rago-cardinal-lemma:eq(1)}. 
Consequently, again from Eq.~\eqref{prop:rago-cardinal-lemma:inclusion}, we get that
\begin{equation}
\label{prop:rago-cardinal-lemma:inclusion(2)}
X \subseteq A \cup a^n A \cup a^{2n} A,
\end{equation}
and hence
\begin{equation}
\label{prop:rago-cardinal-lemma:inclusion(3)}
a^n X \subseteq a^n A \cup a^{2n} A \cup a^{3n} A.
\end{equation}
By Eq.~\eqref{prop:rago-cardinal-lemma:inclusion(2)}, the elements of $A$ on the right-hand side of 
Eq.~\eqref{prop:rago-cardinal-lemma:inclusion} can only be contributed by $X$, and so
$A \subseteq X$. Likewise, by Eq.~\eqref{prop:rago-cardinal-lemma:inclusion(3)}, the elements of $a^{3n}A$ can only be contributed by $a^n X$. Thus $a^{3n} A \subseteq a^n X$, and hence $a^{2n} A \subseteq X$ (by cancellativity). Combining these inclusions, we conclude that
\[
A \cup a^{2n} A \subseteq X
\subseteq A\cup a^nA\cup a^{2n}A.
\]
It follows that $X = \phi(V)$ with $V := X \cap a^n A \subseteq a^n A$, which completes the proof of the claim.

Now, using that $f$ is an isomorphism,
we gather from Eqs.~\eqref{prop:rago-cardinal-lemma:eq(0)} and~\eqref{lem:rago-cardinal-lemma:eq(4)} that also the equation
\begin{equation}
\label{prop:rago-cardinal-lemma:companion-equ}
\{1_K, b^n\} Y
   = \{1_K,b^n\}^3 B
\end{equation}
has precisely $2^{|A|}$ solutions $Y \in \mathcal P_{\fin,1}(K)$. On the other hand, since $K$ is cancellative and $b$ has
infinite order in $K$, the same argument as above, applied directly in $K$, shows that
Eq.~\eqref{prop:rago-cardinal-lemma:companion-equ} has precisely $2^{|B|}$ solutions. Consequently, $2^{|A|}=2^{|B|}$, and hence $|A| = |B| = |f(A)|$, as desired.
\end{proof}

The relevance of Proposition~\ref{prop:rago-cardinal-lemma} to the broader
picture of the paper lies in the role it plays in establishing a key
property (Proposition \ref{prop:h-valuation-is-preserved}) of the $\mathbb N$-valued function introduced in the next
definition.

\begin{definition}
\label{def:h-adic-valuation}
Given a semigroup $H$ and an element $h \in H$, we refer to the function
\[
\mathcal P_\fin(H) \to \mathbb{N} \colon A \mapsto |\{a\in A: ha\in A\}|
\]
as the (\evid{left}) \evid{$h$-autocorrelation} on $\mathcal P_{\fin}(H)$, and denote it by $\nu_h$ whenever there is no risk of confusion.
\end{definition}

When $H$ is a group (usually abelian in the relevant
literature), this definition is closely related, via
Lemma~\ref{lem:formula-for-correlation} below, to the discrete covariogram in convex
geometry \cite[Sec.~6.1]{Bia-2023} and to difference representation
functions in additive combinatorics \cite[Sec.~6.3]{Nath-2009}. However, in both of these
settings, the set $A$ is fixed and the ``shift parameter'' $h$ is allowed to vary, whereas we fix $h$ and regard $\nu_h$ as a function on $\mathcal P_\fin(H)$. Moreover,
our definition makes sense for arbitrary, possibly non-commutative
semigroups, even when no interpretation in terms of difference sets is
available.

\begin{lemma}
\label{lem:formula-for-correlation}
Let $H$ be a monoid and fix $h \in H$. If $h$ is left-cancellative, then 
\[
\nu_h(A) = |A \cap hA|,
\qquad\text{for all }
A \in \mathcal P_\fin(H).
\]
\end{lemma}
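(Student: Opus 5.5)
The plan is to exhibit an explicit bijection between the set $S := \{a \in A : ha \in A\}$, whose cardinality is $\nu_h(A)$ by Definition~\ref{def:h-adic-valuation}, and the set $A \cap hA$. The natural candidate is the restriction to $S$ of left multiplication by $h$, that is,
\[
\lambda_h \colon S \to A \cap hA, \qquad a \mapsto ha .
\]
Since $A$ is finite, both sets are finite, and a bijection between them gives the equality of cardinalities.

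The steps, in order, are as follows.

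First, check that $\lambda_h$ is well defined. If $a \in S$, then $ha \in A$ by the definition of $S$. Also $ha \in hA$ because $a \in A$. Hence $ha \in A \cap hA$.

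Second, check surjectivity. Take $y \in A \cap hA$ and write $y = ha$ for some $a \in A$. Then $ha = y \in A$, so $a \in S$ and $\lambda_h(a) = y$.

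Third, check injectivity. This is exactly where the hypothesis enters: if $ha = ha'$ with $a, a' \in S$, then left-cancellativity of $h$ gives $a = a'$.

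The argument is routine and I do not expect a genuine obstacle. The only point needing care is that injectivity uses left-cancellativity of $h$ and nothing more. Without it, $|A \cap hA|$ could be strictly smaller than $\nu_h(A)$, because distinct elements of $S$ might be collapsed to the same product. The monoid structure of $H$ plays no role beyond providing the ambient setting.
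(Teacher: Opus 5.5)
Your proposal is correct and follows the paper's argument: the paper also restricts left multiplication by $h$ to $\{a \in A : ha \in A\}$, gets injectivity from left-cancellativity, and shows the image is exactly $A \cap hA$.
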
	

\begin{proof}
Set $X := \{a \in A: ha \in A\}$, and note that $\nu_h(A) = |X|$. Next, let $\phi$ be the function $
X \to H: a \mapsto ha$.
Since $h$ is left-cancellative, $\phi$ is injective. Moreover, it is clear that $\phi[X] \subseteq A \cap hA$, and it takes a moment to check that this inclusion is actually an equality. Indeed, if $x \in A \cap hA$, then $x = ha$ for some
$a \in A$, and $x \in A$ yields $a \in X$. Therefore, $\nu_h(A) = |X| = |A \cap hA|$, as claimed.
\end{proof}

\begin{proposition}
\label{prop:h-valuation-is-preserved}
Let $H$ and $K$ be cancellative monoids, and assume that $H$ contains an element of infinite order. If $g$ is the pullback of an isomorphism $f \colon \mathcal{P}_{\fin,1}(H) \to \mathcal{P}_{\fin,1}(K)$, then
\[
\nu_{h}(A) = \nu_{g(h)}(f(A)),
\qquad\text{for all }
h \in H \text{ and }
A \in \mathcal{P}_{\fin,1}(H).
\]
\end{proposition}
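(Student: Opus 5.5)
The plan is to express $\nu_h(A)$ through cardinalities of sets that $f$ is known to respect, and then invoke Proposition~\ref{prop:rago-cardinal-lemma}. Fix $h \in H$ and $A \in \mathcal P_{\fin,1}(H)$, and set $B := f(A)$ and $k := g(h)$. Since $H$ and $K$ are cancellative, both $h$ and $k$ are left-cancellative. Lemma~\ref{lem:formula-for-correlation} therefore gives
\[
\nu_h(A) = |A \cap hA| \quad\text{and}\quad \nu_k(B) = |B \cap kB|.
\]
Left cancellativity also gives $|hA| = |A|$ and $|kB| = |B|$. By inclusion--exclusion,
\[
|\{1_H, h\}A| = |A \cup hA| = 2|A| - \nu_h(A),
\]
and likewise
\[
|\{1_K, k\}B| = 2|B| - \nu_k(B).
\]

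Next I use that $f$ is a homomorphism together with the defining property of the pullback (Proposition~\ref{prop:two-to-two-lemma}). Note that $\{1_H,h\}A$ lies in $\mathcal P_{\fin,1}(H)$, since it is finite and contains $1_H \cdot 1_H$. Hence
\[
f(\{1_H,h\}A) = f(\{1_H,h\})\,f(A) = \{1_K, k\}B.
\]

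Since $H$ has an element of infinite order and both monoids are cancellative, Proposition~\ref{prop:rago-cardinal-lemma} says that $f$ is cardinality-preserving. Applying this to $A$ and to $\{1_H,h\}A$ yields
\[
|A| = |B| \quad\text{and}\quad |\{1_H,h\}A| = |\{1_K,k\}B|.
\]
Substituting into the two inclusion--exclusion identities gives $2|A| - \nu_h(A) = 2|A| - \nu_k(B)$, hence $\nu_h(A) = \nu_{g(h)}(f(A))$.

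I do not expect a genuine obstacle, since all the difficulty is absorbed by Proposition~\ref{prop:rago-cardinal-lemma}. The only points needing care are small checks. First, $\{1_H,h\}A$ must belong to the domain of $f$. Second, the case $h = 1_H$ needs no separate treatment: then $\nu_h(A) = |A|$, and $g(1_H) = 1_K$ gives $\nu_{1_K}(B) = |B|$, which the same argument also covers. Third, cancellativity must be invoked on the $K$ side as well, so that $|kB| = |B|$ and Lemma~\ref{lem:formula-for-correlation} applies to $k$.
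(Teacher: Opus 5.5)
Your proposal is correct and follows essentially the same route as the paper. Both arguments use Lemma~\ref{lem:formula-for-correlation} and inclusion--exclusion to write $|\{1_H,h\}A| = 2|A| - \nu_h(A)$ (and the analogue in $K$), apply $f$ together with the pullback property to get $f(\{1_H,h\}A) = \{1_K,g(h)\}f(A)$, and then equate cardinalities via Proposition~\ref{prop:rago-cardinal-lemma}.
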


\begin{proof}
Fix $h\in H$ and $A\in \mathcal{P}_{\fin,1}(H)$. By left cancellativity, we have $|hA| = |A|$. Accordingly, it follows from the inclusion-exclusion principle and Lemma \ref{lem:formula-for-correlation} that
\[
|\{1_H, h\}A|=|A \cup hA| = |A| + |hA| - |A \cap hA| = 2\,|A|-\nu_{h}(A).
\]
In a similar way, we find that
\[
|\{1_K,g(h)\}f(A)| = 2\,|f(A)| - \nu_{g(h)} (f(A)).
\]

On the other hand, we know from Proposition \ref{prop:rago-cardinal-lemma} that $|X| = |f(X)|$ for every $X \in \mathcal P_{\fin,1}(H)$. Thus, using that $f$ is an isomorphism $\mathcal P_{\fin,1}(H) \to \mathcal P_{\fin,1}(K)$ and $g$ is its pullback gives
\[
\begin{split}
2\,|A|-\nu_{h}(A) 
& = |\{1_H,h\}A| = |f(\{1_H,h\}A)| = |f(\{1_H,h\})f(A)| \\
	&=|\{1_K,g(h)\}f(A)| = 2\,|f(A)|-\nu_{g(h)}(f(A)) = 2\,|A|-\nu_{g(h)}(f(A)).
\end{split}
\]
Clearly, this yields $\nu_{h}(A)=\nu_{h}(f(A))$ and completes the proof.
\end{proof}
	
We close the section with the following corollary, which is straightforward from Proposition \ref{prop:h-valuation-is-preserved}.

\begin{corollary}\label{nu-preserves}
If $H$ is a cancellative monoid and $f$ is an automorphism of $\mathcal P_{\fin,1}(H)$ with trivial pullback, then $
\nu_{h}(A)=\nu_{h}(f(A))$ for all $h \in H$ and $A \in \mathcal{P}_{\fin,1}(H)$.
\end{corollary}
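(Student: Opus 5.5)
The plan is to specialize Proposition~\ref{prop:h-valuation-is-preserved} to the case $K = H$. Let $f$ be an automorphism of $\mathcal P_{\fin,1}(H)$ whose pullback $g$ (Proposition~\ref{prop:two-to-two-lemma}) is the identity map of $H$. By definition of the pullback, this means $f(\{1_H, h\}) = \{1_H, h\}$ for every $h \in H$. Since $H$ is cancellative and $f$ is an isomorphism $\mathcal P_{\fin,1}(H) \to \mathcal P_{\fin,1}(H)$, Proposition~\ref{prop:h-valuation-is-preserved} gives
\[
\nu_h(A) = \nu_{g(h)}(f(A)) = \nu_h(f(A)), \qquad\text{for all } h \in H \text{ and } A \in \mathcal P_{\fin,1}(H),
\]
which is exactly the claim.

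The one point that needs care is the standing hypothesis of Proposition~\ref{prop:h-valuation-is-preserved}, inherited from Proposition~\ref{prop:rago-cardinal-lemma}, that $H$ contains an element of infinite order. I will state explicitly that the corollary is to be read under this same assumption, as the phrase ``straightforward from Proposition~\ref{prop:h-valuation-is-preserved}'' indicates. This costs nothing for the paper's purposes: every numerical monoid $H \ne \{0\}$ contains elements of infinite order, and the corollary will only be applied to numerical monoids. The degenerate case $H = \{0\}$ is trivial anyway, since then $\mathcal P_{\fin,0}(H) = \{\{0\}\}$.

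Without that hypothesis, $H$ would be a cancellative monoid all of whose elements have finite order, i.e.\ a torsion group. In that case the chain of equalities in the proof of Proposition~\ref{prop:h-valuation-is-preserved} still goes through verbatim, provided $f$ is cardinality-preserving. The only missing ingredient would therefore be an analogue of Proposition~\ref{prop:rago-cardinal-lemma} for torsion groups, and that is the step I would flag as the genuine obstacle if one insisted on full generality. I will not pursue it here, since it is not needed for Theorem~\ref{thm:main}.
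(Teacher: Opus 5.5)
Your proposal is correct and is exactly the paper's intended argument: specialize Proposition~\ref{prop:h-valuation-is-preserved} to $K = H$ with $g = \mathrm{id}_H$, so that $\nu_{g(h)}(f(A)) = \nu_h(f(A))$. You are also right that the corollary silently inherits the hypothesis that $H$ contains an element of infinite order, since the paper's ``straightforward'' derivation needs it too, and this is harmless because every numerical monoid contains positive integers. (Note that $\{0\}$ is not cofinite in $\mathbb N$, so your degenerate case never actually arises.)
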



\section{Focus on numerical monoids}
\label{sec:03}

Having established the necessary preliminaries in Section~\ref{sec:preliminaries}, we now turn our attention to numerical monoids. We shall use, in particular, the following result, which is borrowed from
\cite[Lemma~3.1]{Tri-Wen-2026(a)}.

\begin{lemma}
\label{lem:two-element-sets-are-fixed}
If $H$ is a numerical monoid and $f$ is an automorphism of $\mathcal P_{\fin,0}(H)$, then the pullback of $f$ is trivial. Equivalently, $
f(\{0, x\}) = \{0, x\}$ for every $x \in H$.
\end{lemma}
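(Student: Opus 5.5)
The plan is to show that the pullback $g$ of $f$ (Proposition~\ref{prop:two-to-two-lemma}) is a bijection $H \to H$ that is ``homogeneous'' with respect to multiples, then force it to be a dilation $x \mapsto \lambda x$, and finally use cofiniteness of $H$ to get $\lambda = 1$. Cardinality and autocorrelation are not needed here; only Lemma~\ref{lem:order-is-preserved} is.

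First, $H$ is a submonoid of $\mathbb N$, so every $x \in H \setminus \{0\}$ has infinite order. Lemma~\ref{lem:order-is-preserved}, applied with $K = H$ and written additively, then gives
\[
g(mx) = m\,g(x), \qquad\text{for all } x \in H \text{ and } m \in \mathbb N.
\]
It also gives $g(0) = 0$, since $f(\{0\}) = \{0\}$. Because $g$ is injective, $g(x) \neq 0$ whenever $x \neq 0$.

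Second, fix non-zero $p, q \in H$. The element $pq$ lies in $H$ and is both the $p$-fold multiple of $q$ and the $q$-fold multiple of $p$. Hence
\[
p\,g(q) = g(pq) = q\,g(p),
\]
so $g(p)/p = g(q)/q$. Consequently there is a positive rational $\lambda$ with $g(x) = \lambda x$ for every $x \in H$.

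Third, I determine $\lambda$ by a counting argument. Since $g$ is a bijection of $H$, we have $\lambda H = H$. Let $c$ be such that $\llbracket c, \infty \rrbracket \subseteq H$. For large $N$, the set $H \cap \llbracket 0, N \rrbracket$ has $N + O(1)$ elements. On the other hand, $\lambda H \cap \llbracket 0, N \rrbracket = \lambda\,\bigl(H \cap [0, N/\lambda]\bigr)$ has $N/\lambda + O(1)$ elements. Comparing the two counts as $N \to \infty$ forces $\lambda = 1$. A cleaner alternative: if $\lambda > 1$, then $\lambda H$ misses some integer in $\llbracket c, \infty \rrbracket \subseteq H$, because consecutive elements of $\lambda H$ beyond $\lambda c$ are spaced $\lambda > 1$ apart. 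If $\lambda < 1$, apply the same argument to $g^{-1}$, which is the pullback of $f^{-1}$ and equals $x \mapsto \lambda^{-1} x$.

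Therefore $g$ is the identity, that is, $f(\{0,x\}) = \{0,x\}$ for all $x \in H$. The only step needing care is the first one: checking that Lemma~\ref{lem:order-is-preserved} applies in the additive setting with $K = H$. This is immediate, since every non-zero element of $H$ has infinite order.
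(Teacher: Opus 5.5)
Your argument is correct. The paper does not prove this lemma itself: it imports it verbatim from \cite[Lemma~3.1]{Tri-Wen-2026(a)}. So there is no in-paper argument to match, and your proof has the merit of making Section~\ref{sec:03} self-contained, using only Proposition~\ref{prop:two-to-two-lemma} and Lemma~\ref{lem:order-is-preserved}.

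All three steps check out.
\begin{itemize}
\item The homogeneity $g(mx)=m\,g(x)$ is exactly Lemma~\ref{lem:order-is-preserved} in additive notation. Here the infinite order of $g(x)$, which the paper's proof of that lemma takes from \cite[Proposition~4.1]{Tri-Yan2026(a)}, is immediate anyway, because $g(0)=0$ and $g$ is injective.
\item The identity $p\,g(q)=g(pq)=q\,g(p)$ correctly forces $g(x)=\lambda x$ for some rational $\lambda>0$. This is where rank one enters: any two non-zero elements of $H$ have a common multiple. That is why the same conclusion can fail in rank two, as the paper's remark on \cite[Corollary~3]{Rago26} indicates.
\item Both of your arguments for $\lambda=1$ are valid.
\end{itemize}

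You can shorten the third step. Since $\lambda>0$, the bijection $x\mapsto \lambda x$ of $H$ onto itself is order-preserving. It must therefore send the least positive element of $H$ to itself, which gives $\lambda=1$ at once. This version also shows that cofiniteness plays no role: your proof works verbatim for every submonoid of $\mathbb N$.
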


We begin with a generalization of \cite[Corollary 2.6(i)]{Tri-Yan2025(b)} from $\mathbb N$ to an arbitrary numerical monoid. The proof proceeds along similar lines, but is somewhat smoother in our setting owing to Lemma \ref{lem:two-element-sets-are-fixed}, which was not available when \cite{Tri-Yan2025(b)} was published.

\begin{lemma}
\label{lem:max-is-preserved}
If $H$ is a numerical monoid and $f$ is an automorphism of $\mathcal P_{\fin,0}(H)$, then 
\[
\max(A)=\max f(A),
\qquad\text{for all } A \in \mathcal P_{\fin,0}(H).
\]
\end{lemma}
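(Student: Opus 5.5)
Since $H$ is a numerical monoid, it contains an element of infinite order, for example any positive element. Hence Proposition~\ref{prop:rago-cardinal-lemma} and Lemma~\ref{lem:two-element-sets-are-fixed} are both available. Together they give that $f$ preserves cardinalities and fixes every two-element set $\{0,x\}$. The plan is to detect $\max(A)$ through an algebraic relation involving two-element sets and then transport that relation through $f$.

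The key observation is the following. For $A \in \mathcal P_{\fin,0}(H)$ and $x \in H$ with $x > \max(A)$, the sets $A$ and $x + A$ are disjoint, so
\[
|\{0,x\}+A| = 2|A|.
\]
If instead $x \le \max(A)$ and $x \in A$, then $x \in A \cap (x+A)$, because $0 \in A$. Then $|\{0,x\}+A| < 2|A|$. This second criterion needs $x \in A$, so I would rather use the autocorrelation directly.

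By Corollary~\ref{nu-preserves}, applied with the trivial pullback from Lemma~\ref{lem:two-element-sets-are-fixed}, we have $\nu_h(A) = \nu_h(f(A))$ for every $h \in H$. Now $\nu_h(A) = |A \cap (h+A)| \ge 1$ whenever $h \in A$, since then $h = h + 0$ lies in the intersection. Conversely, if $h > \max(A)$, then $\nu_h(A) = 0$. The cleanest choice is $h = \max(A) =: M$. Then $M \in A \cap (M+A)$, because $M = M + 0$, and no other element can lie in the intersection, since $M + a > M$ for every $a > 0$. Hence $\nu_M(A) = 1$ when $M>0$, and $\nu_h(A)=0$ for every $h>M$.

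So $\max(A)$ is the largest $h \in H$ with $\nu_h(A) \ge 1$, at least when $A \ne \{0\}$; the case $A = \{0\}$ is trivial, since $f$ fixes the identity. More precisely, for $M > 0$ I claim that $\nu_h(A) = 0$ for all $h > M$ and $\nu_M(A) = 1$. Note that $\nu_0(A) = |A|$ handles the degenerate case.

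Since the function $h \mapsto \nu_h(\cdot)$ is the same for $A$ and $f(A)$, the largest $h$ with $\nu_h \neq 0$ coincides, giving $\max(A) = \max f(A)$. Here one must check that $\max(B)$ is detected the same way for $B = f(A)$, which is a subset of $H$ containing $0$. This holds because the characterization above used only $0 \in B$ and finiteness, and $f(A) \ne \{0\}$ since $f$ is a bijection fixing $\{0\}$.

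I expect no real obstacle. The only delicate point is the identity $\nu_h(A) = |A \cap (h+A)|$, which relies on Lemma~\ref{lem:formula-for-correlation} and cancellativity of $\mathbb N$. The other thing to confirm is that Corollary~\ref{nu-preserves} applies, which requires $f$ to have trivial pullback. This is exactly Lemma~\ref{lem:two-element-sets-are-fixed}.
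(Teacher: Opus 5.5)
Your argument is correct, but it takes a different route from the paper.

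\textbf{The paper's route.} The paper invokes an external additive-combinatorial fact, \cite[Lemma~2.2]{Tri-Yan-23(a)}, which gives some $n$ with $(n+1)A = nA + \{0,\max A\}$. It then applies $f$, using only that $f$ is a homomorphism fixing $\{0,\max A\}$ (Lemma~\ref{lem:two-element-sets-are-fixed}). Comparing maxima yields $(n+1)\max f(A) = n\max f(A) + \max A$.

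\textbf{Your route.} You use the autocorrelation invariance of Corollary~\ref{nu-preserves}. You observe that, for any $B \in \mathcal P_{\fin,0}(H)$, one has $\nu_h(B) = 0$ for every $h > \max B$, while $\nu_{\max B}(B) \ge 1$. Since $\max B \in H$, the function $\nu_{\max B}$ is one of those you are allowed to compare. Hence $\max B$ is the largest $h \in H$ with $\nu_h(B) \ge 1$. Equality of the profiles $h \mapsto \nu_h(A)$ and $h \mapsto \nu_h(f(A))$ on $H$ then forces $\max A = \max f(A)$.

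\textbf{Minor remarks.} You were right to check explicitly that $H$ contains an element of infinite order. Corollary~\ref{nu-preserves} is derived from Proposition~\ref{prop:h-valuation-is-preserved}, which needs that hypothesis, even though the corollary's statement omits it. Your separate treatment of $A=\{0\}$ is unnecessary: $0 \in H$ and $\nu_0(\{0\}) = 1$, so your characterization already covers it.

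\textbf{Trade-off.} The paper's proof needs no cardinality preservation, only the fixing of two-element sets plus an imported lemma on iterated sumsets. Yours stays within the paper's own toolkit and matches the autocorrelation arguments of the later lemmas. The price is that it depends on the heavier Proposition~\ref{prop:rago-cardinal-lemma}.
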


\begin{proof}
Let $A \in \mathcal P_{\fin,0}(H)$. It follows from \cite[Lemma 2.2]{Tri-Yan-23(a)} that there exists an integer $n \ge 0$ such that 
\[
(n+1)A = nA + \{0, \max A\}.
\]
Applying $f$ to this latter equation and using Lemma yields
\[
(n+1) f(A) = n f(A) + f(\{0, \max A\}) = n f(A) + \{0, \max A\}.
\]
So, taking maxima and considering that 
\[
\max(kX) = k \max X,
\qquad\text{for all }
k \in \mathbb N
\text{ and }
X \in \mathcal P_\fin(\mathbb Z),
\]
we conclude that
\[
(n+1)\max f(A) = n \max f(A) + \max A,
\]
which shows that $\max f(A) = \max A$ and completes the proof.
\end{proof}

We continue with a generalization of items (ii) and (iii) of \cite[Lemma~2.11]{Tri-Yan2025(b)}. In the course of the proof, we will freely use a couple of \textit{auxiliary} results from \cite[Section~2]{Tri-Yan2025(b)} that are specific to $\mathbb N$. This should not be surprising, since the cases $H=\mathbb N$ and $H\ne\mathbb N$ are inherently different in the statement of Theorem~\ref{thm:main}, and this structural distinction must be reflected somewhere in the proofs.

First, note that, since $H$ is a cofinite subset of $\mathbb N$ (by the definition itself of a numerical monoid), there is a least integer $c \ge 0$, commonly called the \evid{conductor} of $H$, with the property that $\llb c, \infty \rrb \subseteq H$. Clearly, $c = 0$ if and only if $H = \mathbb N$, whereas $c \ge 2$ whenever $H \ne \mathbb N$.

\begin{lemma}
\label{lem:3-element-sets}
Let $H$ be a numerical monoid with conductor $c$ and $f$ be an automorphism of $\mathcal P_{\fin,0}(H)$. If $f$ does not fix the set $\{0, \alpha, \alpha+1\}$ for some integer $\alpha \ge c$, then 
\[
H = \mathbb N
\quad\text{and}\quad
f(\{0, \alpha, \alpha+1\}) = \{0, 1, \alpha+1\}.
\]
\end{lemma}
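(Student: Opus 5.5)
The plan is to pin down $B := f(\{0,\alpha,\alpha+1\})$ using three invariants proved earlier: the cardinality (Proposition~\ref{prop:rago-cardinal-lemma}), the maximum (Lemma~\ref{lem:max-is-preserved}), and an autocorrelation (Corollary~\ref{nu-preserves}). Set $A := \{0,\alpha,\alpha+1\}$. First, $\alpha$ and $\alpha+1$ lie in $H$ because $\alpha \ge c$, so $A \in \mathcal P_{\fin,0}(H)$. If $\alpha = 0$, then $A = \{0,1\}$ is a two-element set, which $f$ fixes by Lemma~\ref{lem:two-element-sets-are-fixed}. So I may assume $\alpha \ge 1$, and then $|A| = 3$.

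Next I apply the three invariants to $B$. The monoid $H$ is cancellative and contains an element of infinite order (any non-zero element), so Proposition~\ref{prop:rago-cardinal-lemma} gives $|B| = 3$. Lemma~\ref{lem:max-is-preserved} gives $\max B = \alpha+1$, and $0 \in B$ by definition of $\mathcal P_{\fin,0}(H)$. Hence
\[
B = \{0, \beta, \alpha+1\}
\quad\text{for some } \beta \in H \text{ with } 1 \le \beta \le \alpha.
\]
By Lemma~\ref{lem:two-element-sets-are-fixed}, the pullback of $f$ is trivial. Corollary~\ref{nu-preserves} therefore yields $\nu_h(A) = \nu_h(B)$ for every $h \in H$.

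The key step is to choose a shift $h$ that is guaranteed to lie in $H$ even when $1 \notin H$. I take $h = \alpha$, which is in $H$ since $\alpha \ge c$. By Lemma~\ref{lem:formula-for-correlation}, $\nu_\alpha(A) = |A \cap (\alpha + A)|$, and $\alpha + A = \{\alpha, 2\alpha, 2\alpha+1\}$. Since $\alpha \ge 1$, we have $2\alpha \ge \alpha+1$, with equality only when $\alpha = 1$. A short check then gives $\nu_\alpha(A) = 1$ when $\alpha \ge 2$ (only $\alpha$ is shared) and $\nu_\alpha(A) = 2$ when $\alpha = 1$. The case $\alpha = 1$ is harmless: then $\beta \in \llb 1, \alpha \rrb = \{1\}$ forces $B = A$.

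So assume $\alpha \ge 2$, which gives $\nu_\alpha(B) = 1$. I count the pairs $(b, b+\alpha)$ with both entries in $B = \{0,\beta,\alpha+1\}$:
\begin{itemize}
\item $b = 0$ contributes exactly when $\alpha \in B$, that is, when $\beta = \alpha$;
\item $b = \beta$ contributes exactly when $\beta + \alpha = \alpha+1$, that is, when $\beta = 1$ (the alternatives $\beta + \alpha \in \{0,\beta\}$ are impossible);
\item $b = \alpha+1$ never contributes, since $2\alpha+1 > \max B$.
\end{itemize}
Hence $\nu_\alpha(B) = 1$ forces $\beta \in \{1,\alpha\}$.

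If $\beta = \alpha$, then $B = A$, so $f$ fixes $A$. Therefore, if $f$ does not fix $A$, we must have $\beta = 1$. Then $1 \in H$, which means $H = \mathbb N$, and $f(A) = \{0,1,\alpha+1\}$, as claimed.

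The only delicate point is choosing an admissible shift $h \in H$; the naive choice $h = 1$ is unavailable when $H \ne \mathbb N$, and $h = \alpha$ resolves this. The remaining step to watch is the degenerate small values of $\alpha$, which I have handled separately above. I do not expect any real obstacle beyond these.
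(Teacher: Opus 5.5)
Your proof is correct and follows essentially the paper's argument. It uses the same three invariants: cardinality (Proposition~\ref{prop:rago-cardinal-lemma}), maximum (Lemma~\ref{lem:max-is-preserved}), and the $\alpha$-autocorrelation (Corollary~\ref{nu-preserves}), which together force $f(\{0,\alpha,\alpha+1\}) = \{0,\beta,\alpha+1\}$ with $\beta \in \{1,\alpha\}$. The differences are cosmetic: you settle $\alpha = 1$ using cardinality and maximum rather than the identity $\{0,1,2\} = 2\{0,1\}$, and you compute $\nu_\alpha$ of the image directly instead of deriving a contradiction from $\beta > 1$.
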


\begin{proof}
Set $E_\alpha := \{0, \alpha, \alpha+1\}$. If $\alpha = 0$ or $\alpha = 1$, then $c = 0$ and hence $H = \mathbb N$; since $\{0, 1, 2\} = 2 \{0, 1\}$, this would however imply by Lemma \ref{lem:two-element-sets-are-fixed} that $E_\alpha$ is a fixed point of $f$ (which is absurd). 

It is therefore clear that $\alpha \ge 2$ and $|E_\alpha| = 3$.
By Proposition \ref{prop:rago-cardinal-lemma} and Lemma \ref{lem:max-is-preserved}, it follows that 
\[
|f(E_\alpha)| = 3
\quad\text{and}\quad
\max f(E_\alpha) = \alpha+1. 
\]
Consequently, there exists $x \in H$ with $1 \le x < \alpha$ such that $E_\alpha \ne f(E_\alpha) = \{0, x, \alpha+1\}$; in particular, it holds that $x \ne \alpha$ as, otherwise, we would have $f(E_\alpha) = E_\alpha$.

Denote by $\nu_\alpha$ the $\alpha$-correlation on $\mathcal P_{\fin,0}(H)$, as introduced in Definition \ref{def:h-adic-valuation}.
Since the pullback of $f$ is trivial (Lemma \ref{lem:two-element-sets-are-fixed}), we gather from Corollary~\ref{nu-preserves} that 
\begin{equation}
\label{lem:sets-fixed-by-f:eq(1)}
\nu_\alpha(f(E_\alpha)) = \nu_\alpha(E_\alpha). 
\end{equation}
On the other hand, using that $\alpha \ge 2$ and hence $\alpha + 1 < 2\alpha$, we obtain from Lemma~\ref{lem:formula-for-correlation} that
\[ 
\nu_\alpha(E_\alpha) = 
\bigl| 
\{0,\alpha,\alpha+1\} \cap \{\alpha,2\alpha,2\alpha+1\} 
\bigr|
= 1.
\]

We are left to show that $1 \in H$ (note that $1 \in H$ if and only if $H = \mathbb N$). Suppose to the contrary that $x > 1$. Then $x < \alpha < \alpha+1 < \alpha+x < 2\alpha+1$, and thus
\[ 
\{0,x,\alpha+1\} \cap \{\alpha,\alpha+x,2\alpha+1\} = \emptyset. 
\] 
Another application of Lemma~\ref{lem:formula-for-correlation} then yields $\nu_\alpha(f(E_\alpha))=0$, in contradiction with Eq.~\eqref{lem:sets-fixed-by-f:eq(1)}. 
\end{proof}

\begin{lemma}
\label{lem:sets-fixed-by-f}
Let $H$ be a numerical monoid with conductor $c$, and let $f$ be an automorphism of $\mathcal P_{\fin,0}(H)$. Assume that $\{0,c+2,c+3\}$ is a fixed point of $f$. Then $f$ fixes the set
\[
\textstyle \{0\}\cup \left\llb a, na + \frac{1}{2}n(n+1) \right\rrb
\]
for all $a, n \in \mathbb N$ such that $a \ge c+2$ and $n \ge a+1$. 
\end{lemma}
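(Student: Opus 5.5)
The plan is to write the set in the statement as a sumset of three-element sets $E_b := \{0,b,b+1\}$ and then use that $f$ is a homomorphism. Precisely, I claim that for $a \ge c+2$ and $n \ge a+1$,
\[
\textstyle \sum_{k=0}^{n-1} E_{a+k} \;=\; \{0\}\cup \left\llb a, na+\frac12 n(n+1)\right\rrb .
\]
Granting this, if $f$ fixes every $E_b$ with $b \ge c+2$, then
\[
\textstyle f\bigl(\sum_k E_{a+k}\bigr)=\sum_k f(E_{a+k})=\sum_k E_{a+k},
\]
which is the statement. So there are two steps: a fixed-point step for all $E_b$, and a purely additive computation.

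\emph{Step 1 (all $E_b$ with $b\ge c+2$ are fixed).}

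If $H\neq\mathbb N$, this is immediate from Lemma~\ref{lem:3-element-sets}: any $E_\alpha$ with $\alpha\ge c$ not fixed by $f$ would force $H=\mathbb N$. The hypothesis on $\{0,c+2,c+3\}$ is not even needed here.

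If $H=\mathbb N$ (so $c=0$), Lemma~\ref{lem:3-element-sets} leaves only two options for each $b\ge 2$: either $f(E_b)=E_b$, or $f(E_b)=\{0,1,b+1\}=\rev(E_b)$. The hypothesis says $E_2$ is fixed. I would then argue by induction on $b$ that $E_b$ fixed implies $E_{b+1}$ fixed. Suppose instead $f(E_{b+1})=\{0,1,b+2\}$, while $f(E_b)=E_b$ and $f(\{0,x\})=\{0,x\}$ by Lemma~\ref{lem:two-element-sets-are-fixed}.

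I would then apply $f$ to a suitable sum such as $E_2+E_{b+1}$, $E_b+E_{b+1}$ or $\{0,1\}+E_{b+1}$. I would compare an invariant of the original sum with the same invariant of the corresponding image sum. The available invariants are cardinality (Proposition~\ref{prop:rago-cardinal-lemma}), maximum (Lemma~\ref{lem:max-is-preserved}), and above all the autocorrelations $\nu_h$, which are preserved by Corollary~\ref{nu-preserves}. The aim is to find some $h$ for which the two values of $\nu_h$ differ.

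A quick check shows that the naive choice $E_b+E_{b+1}$ does not distinguish the two options via $|\cdot|$, $\nu_1$ or $\nu_b$. So the first thing I would try is sums involving the fixed small set $E_2$, such as $E_2+E_{b+1}$ versus $E_2+\{0,1,b+2\}$, with shifts like $h=b$ or $h=b-1$. Failing that, I would try $2E_{b+1}$ versus $2\{0,1,b+2\}$ compared with $E_2+E_{2b}$-type identities. Finding the right distinguishing identity is the step I expect to be the main obstacle. The underlying reason it should exist is that the map $f$ must be either "identity-like" or "reversion-like" globally, and $E_2$ being fixed rules out the reversion pattern, since $\rev(E_2)=\{0,1,3\}\ne E_2$.

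\emph{Step 2 (the sumset identity).}

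A subset $K\subseteq\llb 0,n-1\rrb$ of size $j$ contributes sums filling the interval
\[
\textstyle \left\llb \sum_{k\in K}(a+k),\ \sum_{k\in K}(a+k)+j \right\rrb,
\]
since each chosen summand may be $a+k$ or $a+k+1$. As $K$ ranges over the $j$-subsets, $\sum_{k\in K}k$ takes every value from $\binom j2$ to $\binom j2+j(n-j)$. Hence the $j$-th layer is the full interval
\[
\textstyle \left\llb ja+\binom j2,\ ja+\binom j2+j(n-j)+j \right\rrb .
\]
Consecutive layers overlap or abut: the top of layer $j$ is $ja+\binom j2+j(n-j+1)$, and the bottom of layer $j+1$ is $(j+1)a+\binom{j+1}2$. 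So it suffices that $j(n-j+1)\ge a+j-1$ for $1\le j\le n-1$. This is a concave condition in $j$, so it only needs checking at the endpoints. At $j=1$ it reads $n\ge a$, and at $j=n-1$ it reads $2(n-1)\ge a+n-2$, i.e.\ $n\ge a$. Both hold because $n\ge a+1$. Finally, the top layer $j=n$ ends at $na+\frac12 n(n+1)$, and together with the layer $j=0$, which is $\{0\}$, this gives exactly the claimed set.
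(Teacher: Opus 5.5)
\textbf{There is a genuine gap in Step~1 for $H=\mathbb N$.} Your reduction to the sets $E_b$ is fine, as is your treatment of $H\ne\mathbb N$ via Lemma~\ref{lem:3-element-sets}. Step~2 is also correct: it is a self-contained proof of the sumset identity (it even shows that $n\ge a$ suffices), whereas the paper imports this identity from \cite[Proposition~2.10]{Tri-Yan2025(b)}.

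The gap is this. You correctly reduce to excluding $f(E_\beta)=\{0,1,\beta+1\}$ for some $\beta\ge 3$, given $f(E_2)=E_2$. But you never produce an identity and an invariant that do this; listing candidates is not an argument. Moreover, the shifts you name do not work uniformly. Writing $\beta=b+1$, the shift $h=b$ gives the same value of $\nu_h$ on $E_2+E_\beta$ and on $E_2+\{0,1,\beta+1\}$ for every $b\ge 3$. The shift $h=b-1$ gives $5=5$ when $b=2$. So the only case in which the hypothesis on $\{0,c+2,c+3\}$ matters is left unproved. The paper closes it by citing \cite[Lemma~2.11(i)]{Tri-Yan2025(b)} applied to $f^{-1}$, which forces $1\in E_\alpha$, an absurdity.

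Your first candidate sum does in fact close the gap, with the shift $h=\beta+3$ and no induction. Let $X:=E_2+E_\beta=\{0,2,3\}\cup\llb \beta,\beta+4\rrb$. Since $f(E_2)=E_2$ and $f$ is a homomorphism, $f(X)=E_2+\{0,1,\beta+1\}=\llb 0,4\rrb\cup\{\beta+1,\beta+3,\beta+4\}$. Both sets lie in $\llb 0,\beta+4\rrb$ and contain $0$ and $\beta+4$. So by Lemma~\ref{lem:formula-for-correlation}, for $Z\in\{X,f(X)\}$ the only possible elements of $Z\cap(\beta+3+Z)$ are $\beta+3$ (present if and only if $\beta+3\in Z$) and $\beta+4$ (present if and only if $1\in Z$). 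Now $\beta+3$ lies in both sets, but $1\in f(X)\setminus X$. Hence $\nu_{\beta+3}(X)=1\ne 2=\nu_{\beta+3}(f(X))$. This contradicts Corollary~\ref{nu-preserves}, which applies because the pullback is trivial by Lemma~\ref{lem:two-element-sets-are-fixed}.
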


\begin{proof}
Let $a$ be an integer with $a \ge c+2$, and let $n \ge a+1$. By \cite[Proposition 2.10]{Tri-Yan2025(b)}, we have
\[ A := \{0\} \cup \left\llb a, na + \tfrac{1}{2} n(n+1) \right \rrb = \sum_{i=0}^{n-1} \{0, a+i, a+i+1\},
\]
and hence
\[
f(A) = \sum_{i=0}^{n-1} f(\{0, a+i, a+i+1\}).
\]
It therefore suffices to prove that $f$ fixes the set $E_\alpha := \{0, \alpha, \alpha+1\}$ for every $\alpha \ge c+2$.

Assume to the contrary that $E_\alpha \ne f(E_\alpha)$ for some integer $\alpha \ge 2$. It is then immediate from Lemma \ref{lem:3-element-sets} that $H = \mathbb N$ and hence $c = 0$. Therefore, $f$ is an auto\-mor\-phism of $\mathcal P_{\fin,0}(\mathbb N)$ that fixes the set $\{0, 2, 3\}$. By \cite[Lemma 2.11(i)]{Tri-Yan2025(b)} applied to the inverse of $f$, it follows that $1 \in E_\alpha$, which is however absurd.
\end{proof}

The next lemma is the last ingredient in the proof of the main theorem of the paper (Theorem~\ref{thm:main}).

\begin{lemma}
\label{lem:fix-one-to-fix-them-all}
Let $H$ be a numerical monoid with conductor $c$, and let $f$ be an automorphism of $\mathcal P_{\fin,0}(H)$. If $f$ fixes the set $\{0, c+2, c+3\}$, then it is trivial.
\end{lemma}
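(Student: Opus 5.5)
Fix $X \in \mathcal P_{\fin,0}(H)$; we show $f(X) = X$. The plan is to add to $X$ a large interval-type set $A$ that $f$ fixes by Lemma~\ref{lem:sets-fixed-by-f}, and then recover $X$ from the sumset.

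\emph{Setup.} Choose an integer $a \ge \max\{c+2, \max X + 1\}$ (any large $a$ will do), and let $n \ge a+1$ be large. Put
\[
A := \{0\} \cup \llb a, M \rrb, \qquad M := na + \tfrac12 n(n+1).
\]
By Lemma~\ref{lem:sets-fixed-by-f}, $f(A) = A$. By Lemma~\ref{lem:max-is-preserved}, $Y := f(X)$ satisfies $\max Y = \max X < a$. Applying $f$ to $X + A$ gives $f(X+A) = Y + A$.

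\emph{Recovering $X$ from $X + A$.} Since $X \subseteq \llb 0, a-1 \rrb$ and $0 \in X$,
\[
X + A = X \cup \llb a, M + \max X \rrb .
\]
Thus $X$ is exactly the part of $X + A$ lying below $a$. Likewise, $Y + A = Y \cup \llb a, M + \max X \rrb$, so $Y$ is the part of $Y + A$ below $a$. It therefore suffices to show $f(X + A) = X + A$ for the single set $X + A$, which is no simpler in general. So the choice of $A$ must be combined with an invariant that $f$ preserves.

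\emph{Main obstacle: why $f$ fixes $X + A$.} I would use Corollary~\ref{nu-preserves}. The pullback is trivial by Lemma~\ref{lem:two-element-sets-are-fixed}, so $\nu_h(Z) = \nu_h(f(Z))$ for all $h \in H$. Proposition~\ref{prop:rago-cardinal-lemma} additionally gives $|Z| = |f(Z)|$. Apply these to $Z = X + A$. For each $h \in H$ with $h \ge c$ and $h$ small relative to $a$, Lemma~\ref{lem:formula-for-correlation} gives
\[
\nu_h(X + A) = |(X + A) \cap (h + X + A)| .
\]
This equals a contribution from the long interval, which is the same for $X$ and $Y$, plus a term that depends on the bottom part. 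Concretely, the bottom contribution is $|X \cap (h+X)|$ together with the boundary terms $|\{x \in X : x + h \ge a\}|$ from elements $x$ shifted into the interval. Varying $a$ as well, which is allowed since every large $a$ works, and taking differences in $a$, should isolate the counts $|\{x \in X : x \ge a - h\}|$ for all thresholds. Together with $|Y| = |X|$, these determine the indicator of $X$ on each element, and hence $Y = X$.

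The delicate point is that $h$ must lie in $H$. For $h \ge c$ this is automatic, but the thresholds $a - h$ must then range over all of $\llb 0, \max X \rrb$. This is arranged by taking $a$ from $\max X + c$ up to $\max X + c + \max X$, so that $h = a - t \ge c$ for every $t \le \max X$. If this bookkeeping fails, the fallback is to compare $\nu_h$ directly on $X + \{0, h\}$-type sums. Either way, the counting identity $\nu_h(X+A) - \nu_h(X + A')$ for consecutive $a$ is the step I expect to require the most care.
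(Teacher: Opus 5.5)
Your plan is correct and is essentially the paper's proof: fix $B=\{0\}\cup\llb a,n\rrb$ via Lemma~\ref{lem:sets-fixed-by-f}, write $X+B=X\cup I$ and $f(X)+B=f(X)\cup I$ with the same interval $I$ (since $\max f(X)=\max X=:m$), and compare $\nu_h$ at $h=a-t$ to get $|X\cap\llb t,m\rrb|=|f(X)\cap\llb t,m\rrb|$ for every threshold $t$. The differencing in $a$ is unnecessary, and you need $a-m\le h\le a$ rather than $h$ small relative to $a$: the paper fixes a single $a\ge\max(2m+1,m+c+2)$, so each $h=a-t$ lies in $H$ and exceeds $m$, which kills the term $|X\cap(h+X)|$---and that term is in any case preserved by Corollary~\ref{nu-preserves} applied to $X$ itself.
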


\begin{proof}
Let $A \in \mathcal{P}_{\fin,0}(H)$. We need to show that $f(A) = A$. By Lemma \ref{lem:max-is-preserved}, we have
\begin{equation}
\label{lem:fix-one-to-fix-them-all:eq(1)}
m := \max A = \max f(A). 
\end{equation}
Choose a sufficiently large integer $a$, say
\begin{align}\label{eq:a-lower-bound}
a \geq \max (2m+1, m+c+2).
\end{align}

Since $a \ge c+2$, we are guaranteed by Lemma \ref{lem:sets-fixed-by-f} that there exists $n \geq a+m$ such that 
\begin{equation}
\label{lem:fix-one-to-fix-them-all:eq(2)}
f(B) = B,
\qquad \text{where } 
B := \{0\} \cup \llb a, n\rrb \in \mathcal P_{\fin,0}(H).
\end{equation}
Set $I := \llb a, m+n \rrb$. If $X \in \mathcal P_{\fin,0}(H)$ and $\max X = m$, then
\[
I = \llb a, n \rrb \cup \llb m+a, m+n \rrb = \{0, m\} + \llb a, n \rrb \subseteq X + \llb a, n \rrb \subseteq \llb 0, m \rrb + \llb a, n \rrb = I.
\]
Consequently, we conclude from Eq.~\eqref{lem:fix-one-to-fix-them-all:eq(1)} that
\begin{equation}
\label{eq:B+A-first}
A + B = (A + 0) \cup(A + \llb a, n\rrb)  = A \cup I.
\end{equation}

In a similar fashion, Eqs.~~\eqref{lem:fix-one-to-fix-them-all:eq(1)} and \eqref{lem:fix-one-to-fix-them-all:eq(2)} give
\begin{equation}\label{eq:B+f(A)-first}
f(A + B) = f(A) + B = f(A) \cup I.
\end{equation}

Next, fix $x \in \llb 0, m \rrb \cap H$, and set $h := a - x$. From Eqs.~\eqref{lem:fix-one-to-fix-them-all:eq(1)} and \eqref{eq:a-lower-bound}, we obtain
\begin{equation}
\label{equ:disjointness}
A \cap I = f(A) \cap I = \emptyset
\end{equation}
and
\begin{equation}
\label{equ:bound-on-h}
h \ge a - m \ge \max(m+1, c+2).
\end{equation}
In particular, $h \in H$. Accordingly, let $\nu_h$ be the $h$-autocorrelation on $\mathcal P_{\fin,0}(H)$, as per Definition \ref{def:h-adic-valuation}. 
By Lemma \ref{lem:two-element-sets-are-fixed}, the pullback of $f$ is trivial. Thus, Corollary~\ref{nu-preserves}, together with Eqs.~\eqref{eq:B+A-first} and \eqref{eq:B+f(A)-first}, implies
\begin{equation}\label{eq:nu-are-equal}
\nu_h(A \cup I) = \nu_h(f(A) \cup I).
\end{equation}

On the other hand, for each $y \in \mathbb N$, Eqs.~\eqref{lem:fix-one-to-fix-them-all:eq(1)} and \eqref{equ:bound-on-h} imply that $y+h \geq h > \max A$, and hence $y + \allowbreak h \notin A$. Therefore, since $A$ and $I$ are disjoint by Eq.~\eqref{equ:disjointness}, we find that
\begin{equation}
\label{equ:formula-for-v_h(A+B)}
\begin{split}
\nu_h(A \cup I) 
    & = {\bigl|\{y \in A \cup I: y+h \in A \cup I\}\bigl|} 
    \\
    & = \bigl|\{y \in A \cup I: y+h \in I\}\bigl|
    \\
    & = {\bigl|\{y \in A: y+h \in I\}\bigl|} 
    + {\bigl|\{y \in I: y+h \in I\}\bigl|} 
    \\
    & = {\bigl|\{y \in A: y+h \in I\}\bigl|} + {\bigl|\llb a,n+m-h\rrb\bigl|}.
\end{split}
\end{equation}

We are left to compute the first summand in the rightmost expression of the last displayed chain of equalities.
Pick $y \in A$. By the definition of $h$ (and the fact that $x \ge 0$), we have 
\[
y+h = a + (y - x) \le a + m \le n.
\]
That is, the upper bound required for $y+h$ to belong to $I$ is automatically satisfied. It follows that $y+ \allowbreak h \in \allowbreak I$ if and only if $a + y - x \ge a$, if and only if $y \ge x$. 
From Eq.~\eqref{equ:formula-for-v_h(A+B)}, it is then immediate that
\begin{equation}
\label{eq:nu_h(C+A)-third}
\nu_h(A \cup I) = {\bigl|A \cap \llb x, m \rrb \bigl|} + {\bigl|\llb a, n+m-h \rrb\bigl|}.
\end{equation}

Mutatis mutandis, the same argument shows that
\begin{equation}
\label{eq:nu_h(C+f(A))}
\nu_h(f(A) \cup I) = {\bigl|f(A) \cap \llb x, m\rrb \bigl|} + {\bigl|\llb a, n+m-h\rrb\bigl|}.
\end{equation}

So, stitching the pieces together, we are in a position to conclude from Eqs.~\eqref{eq:nu-are-equal}, \eqref{eq:nu_h(C+A)-third}, and \eqref{eq:nu_h(C+f(A))} that 
\begin{equation}
\label{eq:circled-equation}
\bigl|A \cap \llb x, m\rrb \bigl| = 
\bigl|f(A) \cap \llb x, m\rrb \bigl|,
\qquad 
\text{ for all } 
x \in \llb 0, m \rrb \cap H.
\end{equation}

Finally, suppose by way of contradiction that $A \ne f(A)$. There then exists a largest integer $x_0$ in the symmetric difference of $A$ and $f(A)$, and we may assume by symmetry that $x_0 \in A \setminus f(A)$. By Eq.~\eqref{lem:fix-one-to-fix-them-all:eq(1)} and the maximality of $x_0$, it follows that
\[
A \cap \llb x_0+1, m \rrb = f(A) \cap \llb x_0+1, m \rrb,
\]
and hence 
\[
\bigl| A \cap \llb x_0, m \rrb \bigr| = 1 + \bigl| f(A) \cap \llb x_0, m \rrb \bigr|.
\]
This is, however, in contradiction with Eq.~\eqref{eq:circled-equation} and completes the proof.
\end{proof}

\section{Proof of Theorem~\ref{thm:main}}
\label{sec:proof-of-main-thm}

Let $f$ be an automorphism of $\mathcal P_{\fin,0}(H)$, and let $c$ be the conductor of $H$. 
If $f$ fixes $\{0, c+2, c+3\}$, then Lemma \ref{lem:fix-one-to-fix-them-all} ensures that $f$ is the trivial automorphism. Otherwise, we have from Lemma \ref{lem:3-element-sets} that $H = \mathbb N$ and $f$ maps $\{0, 2, 3\}$ to $\{0, 1, 3\}$. Therefore, the function
\[
f^\prime \colon \mathcal P_{\fin,0}(\mathbb N) \to \mathcal P_{\fin,0}(\mathbb N) \colon X \mapsto \max X - f(X),
\]
fixes $\{0, 2, 3\}$. 
On the other hand, we are guaranteed by \cite[Lemma 2.7]{Tri-Yan2025(b)} that $f' \in \aut(\mathcal P_{\fin,0}(\mathbb N))$. Again by Lemma \ref{lem:3-element-sets}, it follows that $f'$ is the identity on $\mathcal P_{\fin,0}(\mathbb N)$, that is,
\[
\max X - f(X) = X, 
\qquad
\text{for every }
X \in \allowbreak \mathcal P_{\fin,0}(\mathbb N). 
\]
Thus, $f$ is the reversion map defined in Eq.~\eqref{equ:reversion-map}, and the proof of the theorem is complete.

\section*{Acknowledgments}

The authors were supported by the 111 Center under Grant No.~D26018 and by the Natural Science Foundation of Hebei Province under Grant No.~A2023205045.

\end{document}